\documentclass[11pt]{article}
\usepackage[utf8]{inputenc}

\usepackage{amsmath}
\usepackage{amsfonts}
\usepackage{accents}
\usepackage{graphicx}
\usepackage{geometry}
\usepackage{cancel}
\usepackage{fancyhdr}
\usepackage{dsfont}
\usepackage{mathrsfs}
\usepackage{upgreek}
\usepackage{amsthm}
\usepackage{amssymb}
\usepackage{mathtools}
\usepackage{pdfpages}
\usepackage{titlesec}
\usepackage{hyperref}
\usepackage{nicematrix}
\usepackage[sort&compress,comma,numbers]{natbib}
\setcitestyle{square}
\usepackage{newtxtext,newtxmath}
\usepackage{chngcntr}
\usepackage{multirow}
\usepackage{booktabs}
\usepackage{enumitem}

\newtheorem{theorem}{Theorem}[section]

\newtheorem{proposition}{Proposition}[subsection]

\newtheorem{definition}{Definition}[subsection]
\newtheorem{assumption}{Assumption}[subsection]
\newtheorem{corollary}{Corollary}[subsection]

\DeclareMathOperator*{\argmin}{argmin}

\renewenvironment{abstract}
 {\par\noindent\textbf{\abstractname.} \ignorespaces}
 {\par\medskip}
 
 \renewcommand\thesubsubsection{\arabic{subsubsection}.}
 \titleformat{\subsubsection}{\itshape}{\thesubsubsection}{1em}{\itshape}

\title{\textbf{Real-time inverse solutions via neural matrix operators}}

\author{Julie V. Pham\textsuperscript{1,}\footnote{Graduate Research Assistant, Department of Aerospace Engineering and Engineering Mechanics.} , \hspace{1pt} Thomas O'Leary-Roseberry\textsuperscript{2,}\footnote{Assistant Professor, Department of Mathematics.} ,  \hspace{1pt} Omar Ghattas\textsuperscript{1,}\footnote{Professor, Walker Department of Mechanical Engineering, Oden Institute for Computational Engineering and Sciences.} , \hspace{1pt} and Karen E. Willcox\textsuperscript{1,}\footnote{Professor, Oden Institute for Computational Engineering and Sciences. External Faculty, Santa Fe Institute.}}
\date{\emph{\textsuperscript{1}The University of Texas at Austin, Austin, TX 78712, USA}\\
\textsuperscript{2}\emph{The Ohio State University, Columbus, OH 43210, USA}\\ \
}

\begin{document}

\maketitle

\begin{abstract}

Rapid data assimilation is required for real-time prediction and control in digital twins. For many physical systems, the data assimilation task requires the solution of a physics-constrained inverse problem, which is often computationally intractable in real time using traditional physics solvers. This work presents a reduced-basis neural operator approach to enable real-time inverse problem solutions in the digital twin setting.
Our approach specifically targets the large class of problems with spatiotemporal dynamics governed by partial differential equations (PDEs) that are parameterized nonlinearly with respect to model parameters $m$, and linearly with respect to inversion parameters $q$.
Based on this physical structure, our neural operator approximates the nonlinear map from the model parameters $m$ to the parameter-to-observable operator $\mathcal{F}(m)$ in a reduced subspace. 
Since the output of the neural operator is the parameter-to-observable operator itself (manifested as a matrix), we refer to this approach as NEural Matrix Operator (NEMO). 
With NEMO, for new given $m$, we enable a closed-form inverse problem solution for $q$ in a reduced subspace. We apply NEMO in two real-world applications: contaminant transport initial condition identification, and hypersonic vehicle load identification. We show that NEMO delivers high quality inverse problem solutions for data assimilation in real time, with over three orders of magnitude speedup compared to constructing the reduced operator with the PDE solver. Further, NEMO demonstrates comparable inverse performance to a state-of-the-art multiple-input neural operator, while reducing online computational complexity by over an order of magnitude and providing real-time uncertainty quantification.

\end{abstract}

\section{Introduction}

Our goal is the rapid real-time solution of inverse problems arising from physical systems with spatiotemporal dynamics governed by partial differential equations (PDEs) with an unknown source, such as a forcing term, initial condition, or boundary condition. Such problems arise in a wide variety of physical systems, where uncertain environments may impose an unknown source that cannot be measured directly. 
In particular, such problems underpin the deployment of digital twins in many applications across science, engineering, and medicine, where solving the inverse problem under extreme computational cost and time constraints is a critical challenge~\cite{NAP26894}.
As new data are acquired, an inverse problem must be solved under real-time operating constraints to update the digital twin and thus enable the downstream tasks of prediction, planning, and control. In these settings, using traditional physics solvers is infeasible due to their high computational cost. 
To address this challenge, we aim to solve inverse problems for PDE-governed systems in real time using a neural operator approach. Specifically, we target the class of PDEs that are parameterized nonlinearly with respect to model parameters $m$, and linearly with respect to inversion parameters $q$, which represent the unknown source. These parameters are related to the observational data $\mathbf{d}$ through the parameter-to-observable (p2o) operator $\mathcal{F}(m)$, which encompasses the governing physics of the PDE, and is nonlinearly parameterized by the model parameters $m$. 
Problems of this structure arise in many practical settings, including reconstruction of earthquake ruptures from seismometer waveforms for uncertain Earth model wave speeds, inversion of greenhouse gas surface fluxes from satellite measurements for uncertain atmospheric flow, identification of underwater pipeline leaks from acoustic pressure measurements for uncertain ocean wave speed, and inference of underground contaminants from well observations for uncertain subsurface flows, to name a few.

We propose learning the operator $\mathcal{F}(m)$ via a reduced-basis neural operator methodology as depicted in Figure~\ref{fig:method-outline}. Here, the learning task is to approximate the map $m \mapsto \mathcal{F}(m)$, which outputs the operator itself, as opposed to learning the action of $\mathcal{F}$ on the parameters $q$ or $m$ in typical neural operator approaches. We refer to this surrogate map as NEural Matrix Operator (NEMO). The reduced basis structure enables efficient and scalable learning of the map. It also naturally leads to a fast-to-solve inverse problem due to the availability of a closed-form least-squares solution. This solution can be computed with low costs and
$\mathcal{O}(dr^2 + r^3)$ complexity, where $d$ is the dimension of the observables, and $r$ is the dimension of the reduced basis. Since $r$ can be made small for many problems, this translates into an algorithm that is suitable for edge and real-time computing settings.

\begin{figure}[b!]
    \centering
    \includegraphics[width=0.98\linewidth]{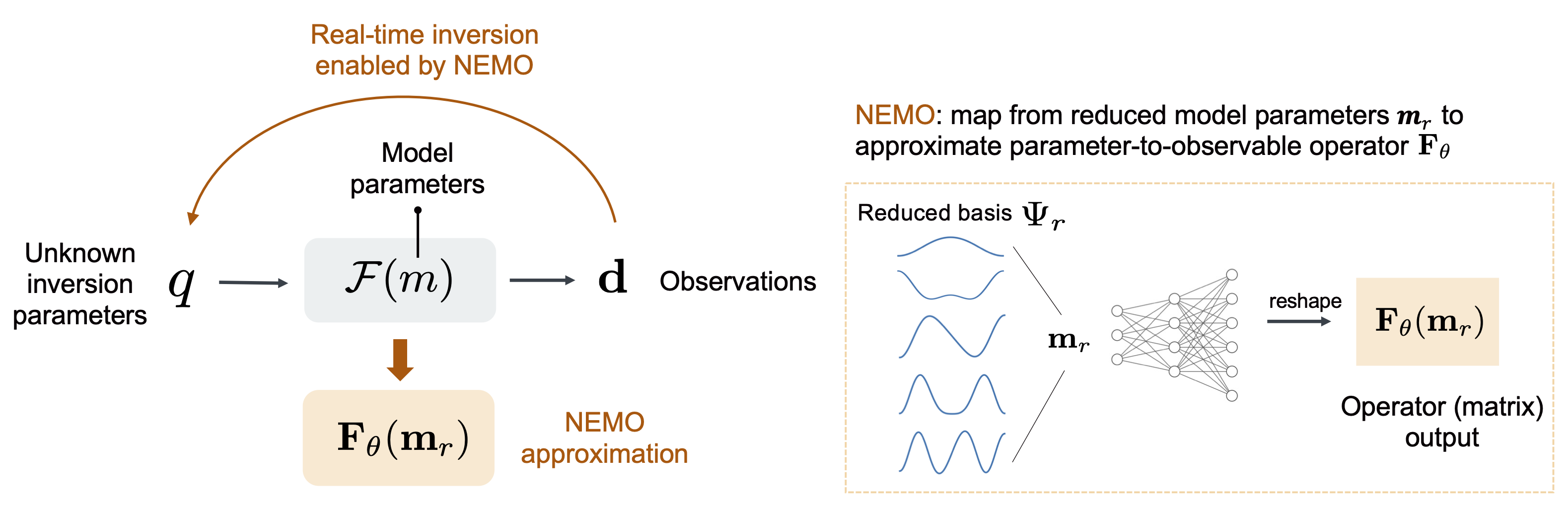}
    \caption{\textbf{NEural Matrix Operator (NEMO) framework.} NEMO provides a surrogate $\mathbf{F}_\theta(\mathbf{m}_r)$ for the parameter-to-observable operator $\mathcal{F}(m)$ by learning the map from a reduced basis representation of the model parameters to a reduced basis representation of the operator $\mathcal{F}(m)$. Since this preserves the physics structure of the problem, we enable inversion for the parameters $q$ in real time for fast data assimilation in digital twins. }
    \label{fig:method-outline}
\end{figure}

Broadly, neural operator surrogates are a promising direction for real-time, edge computing settings due to their rapid evaluation speed, and readily available gradients. In the many-query setting, neural operators can function as a surrogate model for an expensive PDE evaluation, enjoying fast evaluation speed at the expense of an initial training cost. Several prior works have developed methods for neural operators \cite{li2021, lu2021, bhattacharya2021, olearyroseberry2023dino} and demonstrated their potential in inverse problems~\cite{cao2025derivative, cao2024lazydino, wang2024, dai2023, jiao2024solvingforwardinversepde,lu2022multifidelity,kaltenbach2023semi}.
However, these methods primarily focus on offline inverse problems, and require solving a (typically nonconvex) nonlinear optimization problem governed by the neural operator as well as expensive sampling for every new observational data instance, for which convergence to a solution remains a computational bottleneck. Some neural operator approaches learn the inverse map directly \cite{molinaro2023_NIO, cho2025physicsinformed}. However, these approaches do not exploit the physical structure of the target problem and instead build a generic, hard-to-interpret nonlinear approximation.
In this work, we preserve the physical structure and consequently the availability of a closed-form solution for rapid, real-time evaluation. We demonstrate how NEMO can be used to solve inverse problems in two applications: contaminant initial condition inversion for emergency response planning, and hypersonic aerodynamic pressure estimation for guidance and control. We summarize our contributions as follows: 
\begin{enumerate}
    \item \textbf{Real-time methodology for solving inverse problems} for PDE-governed systems with unknown source parameters via nonlinearly parameterized least-squares solutions. 
    \item A novel \textbf{reduced-basis neural operator} that outputs the reduced parameter-to-observable operator, enabling the rapid solution of the aforementioned inverse problems. 
    \item \textbf{Supporting theory} establishing the consistency of our approach, and discretization dimension independence in the case of smooth maps. 

\end{enumerate}
The remainder of the paper is outlined as follows. Section~\ref{sec:methods} details the NEMO methodology for solving inverse problems, and Section~\ref{sec:results} demonstrates the NEMO performance on the application problems. Section~\ref{sec:conclusion} concludes and discusses directions for future work.

\section{Methods}\label{sec:methods}

This section presents the methodology for real-time inverse solutions using NEMO, where Section~\ref{sec:ip-formulation} defines the inverse problem formulation, Section~\ref{section:finding-nemo} introduces the NEMO method, and Section~\ref{section:ip_subspace} presents the subspace inverse problem and supporting theory.

\subsection{Problem Definition}\label{sec:ip-formulation}
We define an inverse problem of the form 
\begin{align} \label{eq:ip_system}
    \min_q \frac{1}{2}\|\mathcal{F}(m)q - \mathbf{d}\|_{\Gamma^{-1}}^2 + \frac{\gamma}{2}\|q\|^2_\mathcal{Q}
\end{align}
with the state equation and parameter-to-observable operator of the governing PDE given by:
\begin{subequations} \label{eq:fwd_map_system}
\begin{align}
    &A(m)u = Cq \quad &\text{(State equation / PDE)} \label{eq:state_eq}\\
    &\mathcal{F}(m) = B A^{-1}(m)C \quad &\text{Parameter-to-observable (p2o) operator}
\end{align}
\end{subequations}
Here, $A(m):\mathcal{U} \rightarrow \mathcal{U}'$ is a parametrized PDE operator, $u\in \mathcal{U}$ is the state, and $q \in \mathcal{Q}$ are the unknown inversion parameters (e.g., forces, boundary conditions, initial conditions) with coupling operator $C:\mathcal{Q}\rightarrow \mathcal{U}'$, where $\mathcal{U}'$ is the topological dual of $\mathcal{U}$.  This coupling operator allows for $q$ to abstractly represent sources, boundary conditions and initial conditions; for a more in-depth discussion, see Appendix~\ref{appendix:coupling}. The model parameters $m \in \mathcal{M}$ can be infinite-dimensional or finite-dimensional. We assume that $m$ are estimated or directly observed by means external to the present inverse problem. We additionally assume that $\mathcal{U},\mathcal{Q},\mathcal{M}$ are real-valued separable Hilbert spaces, where associated norms are introduced via the inner products e.g., $\|q\|^2_\mathcal{Q} = \langle q , q \rangle_\mathcal{Q}$. The observational data $\mathbf{d} \in \mathbb{R}^{d}$ are assumed to be finite-dimensional for simplicity, however, the present methodology can be extended to full field observations using similar dimension reduction strategies. The PDE observables corresponding to the data, $\mathcal{F}(m)q$, are obtained via the observation operator $B\in \mathcal{L}(\mathcal{U},\mathbb{R}^d)$, where $\mathcal{L}(\mathcal{U},\mathbb{R}^d)$ is the space of linear operators from $\mathcal{U}$ to $\mathbb{R}^{d}$; similarly, the parameter-to-observable operator $\mathcal{F}(m) \in \mathcal{L}(\mathcal{Q},\mathbb{R}^d)$.  We consider a zero mean, additive Gaussian noise model, such that
\begin{equation}
    \mathbf{d} = \mathcal{F}(m)q + \epsilon
\end{equation}
where $\epsilon \sim \mathcal{N}(0,\Gamma)$, and $\Gamma$ is symmetric positive definite. 

The corresponding solution of the inverse problem \eqref{eq:ip_system} is given by the least squares solution

\begin{equation}\label{eq:ip-solution}
    q^\star = {\big[\underbrace{\mathcal{F}(m)^*\Gamma^{-1}\mathcal{F}(m)}_{\mathcal{H}(m)} + \gamma I_\mathcal{Q}\big]}^{-1}\mathcal{F}(m)^*\Gamma^{-1}\mathbf{d},
\end{equation}
where 
$^*$ denotes the adjoint, (e.g., $y^T\mathcal{F}(m)q = \langle y, \mathcal{F}(m)q\rangle_2 = \langle \mathcal{F}(m)^*y,q\rangle_\mathcal{Q}$), $\gamma$ is the regularization parameter, and $I_\mathcal{Q}$ is the identity operator on $\mathcal{Q}$. 
Computing this solution using traditional PDE approaches for real-time inversion is infeasible, since each application of $\mathcal{F}$ or its adjoint require the potentially expensive application of a PDE solution operator ($A^{-1}$). We are additionally concerned with the dimensionality of numerical approximations of the space $\mathcal{Q}$, since the inverse solution requires solution of linear systems with  operator given by $\mathcal{H}(m) + \gamma I_\mathcal{Q}$. Equation~\ref{eq:ip-solution} shows that the inverse solution depends on $m$ via $\mathcal{F}(m)$. Since we need to rapidly and repeatedly solve \eqref{eq:ip_system} for varying $m$ and $\mathbf{d}$, we seek a surrogate for the nonlinearly parametrized linear operator $\mathcal{F}$.
In Section \ref{section:finding-nemo},
we propose a reduced-basis neural operator method to learn the map $m \mapsto \mathcal{F}(m)$. 
Learning this map in a reduced subspace leads to a subspace definition of the inverse problem, with solutions of the same form as~(\ref{eq:ip-solution}). As we will see in Section \ref{section:ip_subspace}, the subspace structure enables fast, real-time inversion.

\subsection{Finding NEMO: Neural Matrix Operator Learning}
\label{section:finding-nemo}

We wish to learn an approximation of the map $\mathcal{F}:\mathcal{M} \rightarrow \mathcal{L}(\mathcal{Q},\mathbb{R}^d)$ in order to enable rapid inversion (Eq.~\ref{eq:ip-solution}). 
We employ reduced-basis neural operator learning approaches to learn matrix (linear operator) valued outputs instead of  vector (function) valued outputs that are typical of these methods.
By constructing a linear operator output directly, we ensure that our representation respects the known linear relationship between $\mathcal{Q}$ and the observable space $\mathbb{R}^d$.
We utilize reduced basis representations for both the functions $q$ and $m$, 
since the operator $\mathcal{F}(m)$ admits a low rank representation for a wide spectrum of elliptic, parabolic, and even hyperbolic PDE problems \cite{ghattas_willcox_2021}. This stems from the smoothing and compact nature of many PDE solution operators, as well as the finite dimensionality of observation space $\mathbb{R}^d$. While we use linear subspace representations of $q$ and $m$, we note that our framework extends naturally to nonlinear dimension reduction for 
$\mathcal{M}$. 

Specifically, we expand the functions $q$ and $m$ using truncated expansions in orthonormal bases (ONB), as is done in PCANet \cite{bhattacharya2021,hesthaven2018non} and other reduced basis neural operator methods \cite{o2022derivative,luo2025dimension,herrmann2024neural}. Given ONBs $\{\phi_i\}_{i}$ and $\{\psi_j\}_{j}$ for $\mathcal{Q}$ and $\mathcal{M}$ respectively, we represent $q$ and $m$ as truncated expansions
\begin{equation} \label{eq:qr_mr_expansion}
    q_r = \sum_{i=1}^{r_q} \langle q,\phi_i\rangle_\mathcal{Q}\phi_i = \Phi_r\Phi_r^*q \qquad m_r = \sum_{j=1}^{r_m} \langle m , \psi_j\rangle_\mathcal{M}\psi_j = \Psi_r\Psi_r^*m
\end{equation}
where $\Phi_r \in \mathcal{L}(\mathbb{R}^r,\mathcal{Q})$ and $\Psi_r \in \mathcal{L}(\mathbb{R}^r,\mathcal{M})$ are basis expansion operators. Their adjoints inherit the scaling (Riesz maps) for the associated inner products.
We can now represent the functions in their truncated coefficient representations, $\mathbf{q}_r = \Phi_r^*q \in \mathbb{R}^{r_q}$, and $\mathbf{m}_r = \Psi_r^*m \in \mathbb{R}^{r_m}$. We construct a neural network surrogate for the coefficient mapping, $\mathbf{F}_\theta : \mathbb{R}^{r_m} \rightarrow \mathbb{R}^{d\times r_q}$, parameterized by neural network weights $\theta$. The neural network weights are found via an empirical risk approximation of the following problem
\begin{equation}
    \min_\theta \mathbb{E}_{m \sim \mu} \left[\|\mathbf{F}_\theta(\Psi_r^*m) - \mathcal{F}(m)\Phi_r\|^2_F\right],
\end{equation}
where $\mu$ is a distribution of interest over $\mathcal{M}$. Because we use a truncated ONB expansion of $\mathcal{Q}$ to reduce the dimension of  $\mathcal{F}$, the output will always be a truncated coefficient expansion of the operator (i.e., with entries $e_i^T\mathcal{F}(\cdot)\phi_j$). This is always a matrix, and thus we refer to the approach as neural matrix operator (NEMO).  Note that the output of the neural network can in practice be the flattened matrix, which is then reshaped as shown in Figure~\ref{fig:method-outline}. Training data for this learning task are computed by drawing independent samples of the parameter $m_i \sim \mu$, and computing the corresponding reduced solution operator $\mathcal{F}(m_i)\Phi_r$. 

\paragraph{Choices of bases}

Reduced basis approximation is a powerful and well-studied dimension reduction tool in computational science \cite{quarteroni2015reduced, benner2017model}. By finding appropriate reduced bases for $m$ and $q$ and representing them by their reduced coefficients, we can simultaneously reformulate the operator learning problem and inverse problem so that their computational complexities do not depend on ambient discretization dimensions. This ensures that the derived algorithms will be scalable, and also has additional benefits in the inverse problem, such as serving as effective regularization \cite{vogel2002computational,benning2018modern}.
We consider a few relevant choices. First, we consider the proper orthogonal decomposition (POD) \cite{quarteroni2015reduced},
\begin{equation}
    \mathbb{E}_{m \sim\mu} [\langle m-\overline{m},\psi_j\rangle_\mathcal{M}(m-\overline{m})] = \lambda_j^{(m)}\psi_j \qquad \langle \psi_i,\psi_j\rangle_{\mathcal{M}} = \delta_{ij}.
\end{equation}
We note that this is equivalent to PCA in function space \cite{ramsay2002applied} and is related to the Karhunen--Lo\`eve expansion of the function $m$. Alternatively, for a goal-oriented dimension reduction strategy for the domain $\mathcal{Q}$ of the operator $\mathcal{F}(m)$, we can compute the following: 
\begin{equation} \label{eq:goal_oriented_F_reduction}
   \underbrace{ \mathbb{E}_{m \sim \mu} \left[ \mathcal{F}(m)^*\Gamma^{-1}\mathcal{F}(m) \right]}_{\overline{\mathcal{H}}} \phi_i = \lambda^{(\overline{\mathcal{H}})}_i \phi_i \qquad \langle \phi_i,\phi_j\rangle_\mathcal{Q} = \delta_{ij}.
\end{equation}
With this approach, as a direct corollary of Fan's Theorem \cite{fan1949theorem} as stated in \cite{bhattacharya2021}, we can derive the following error bound:
\begin{equation} \label{eq:f_fans_thm}
    \mathbb{E}_{m \sim \mu} \left[\| \Gamma^{-\frac{1}{2}}\mathcal{F}(m)(I_\mathcal{Q} - \Phi_r\Phi_r^*)\|_{HS(\mathcal{Q},\mathbb{R}^d)}^2 \right] = \sum_{j=r_q+1}^\infty \lambda_j^{(\overline{\mathcal{H}})}.
\end{equation}
This bound demonstrates that $\mathcal{F}$ is well approximated when reduced by $\Phi_r$ on the right when the eigenvalues of $\overline{\mathcal{H}}$ decay rapidly, which we expect due to the finite dimensionality of $\mathbb{R}^d$ and the smoothing nature of many PDE solution operators. 
See Appendix~\ref{appendix:dim_reduction} for a more thorough discussion, including errors due to the empirical estimation of this basis from samples. 

\paragraph{Training data generation} 

To generate training data, we obtain independent samples of the parameter $m_i \sim \mu$ and compute the corresponding reduced solution operator $\mathcal{F}(m_i)\Phi_r$. This process can be made efficient when amortizing linear algebra associated with the PDE solution operator $A^{-1}(m)$. When utilizing sparse factors, the costs of factorization can be amortized over the application of triangular factors to the columns of $\Phi_r$. Alternatively, when utilizing iterative methods, the need for many right-hand side evaluations incentivizes the investment in advanced preconditioning methods. Further training data generation details for each numerical example can be found in Appendix~\ref{appendix:numerics}. 

\subsection{Subspace Inverse Problem} \label{section:ip_subspace} 

By restricting the inverse problem to the span of $\Phi_r$, we recover the following subspace normal equations:
\begin{equation}\label{eq:subspace_ip}
    \mathbf{q}_r^\star = [\Phi_r^*\mathcal{F}(m)^*\Gamma^{-1}\mathcal{F}(m)\Phi_r + \gamma I_r]^{-1}\Phi_r^*\mathcal{F}(m)^* \Gamma^{-1}\mathbf{d}.
\end{equation}
The solution of this problem requires inverting a matrix only in $\mathbb{R}^{r_q\times r_q}$; the worst case computational complexities are thus $\mathcal{O}(dr_q^2 + r_q^3)$, and are independent of the grid representation of $\mathcal{Q}$. We approximate \eqref{eq:subspace_ip} via NEMO as
\begin{equation}\label{eq:subspace_ip_nemo}
    \mathbf{q}_r^\ddagger = [\mathbf{F}_\theta(\Psi_r^*m)^*\Gamma^{-1}\mathbf{F}_\theta(\Psi_r^*m) + \gamma I_r]^{-1}\mathbf{F}_\theta(\Psi_r^*m)^* \Gamma^{-1}\mathbf{d}.
\end{equation}
Here, $\mathbf{F}_\theta(\Psi_r^*m)^*$ denotes the Euclidean adjoint of the reduced operator, while $\Gamma^{-1}$ provides the observation-space weighting. By utilizing orthonormal bases, the restriction to the coefficient representations $\mathbf{q}_r = \Phi_r^*q$ inherits the inner product for $\mathcal{Q}$, preserving the $\mathcal{Q}$ inner product scaling. As noted previously, this formulation has other benefits in addition to computational savings. The subspace structure imposes a form of regularization by reconstructing the unknown parameter in a finite decomposition of smooth modes, as in truncated SVD approaches for inverse problems~\cite{vogel2002computational,hansen2010discrete}. The regularization parameter $\gamma$ can be determined using methods such as the L-curve (Pareto front) or Morozov discrepancy principle \cite{vogel2002computational}.

To assess the performance of NEMO inversion, we analyze the accuracy of the proposed NEMO-based inversion compared to using the true reduced p2o operator, as well as the complexity of neural operators needed to achieve a desired accuracy. Theorem~\ref{theorem:ip_ua} establishes results for NEMO inversion consistency and complexity requirements.

\begin{theorem}[NEMO Inversion Consistency and Dimension-Independence for Smooth Maps]\label{theorem:ip_ua}

\begin{subequations}\label{eq:noip_consistency}
Assume that $\mathbf{d} \in \mathbb{R}^{d}$, $\Gamma^{-1} \in \mathbb{R}^{d\times d}$. If $\mathcal{F} \text{ and } \mathbf{F}_\theta(\Psi_r^*(\cdot))\Phi_r^* \in L^\infty(\mathcal{M},\mu; \mathcal{L}(\mathcal{Q},\mathbb{R}^d))$, then
\begin{equation}\label{eq:linf_ipua}
\|\mathbf{q}_r^\star(m) - \mathbf{q}_r^\ddagger(m) \|_2 = \mathcal{O}\left(\|\mathbf{F}_\theta (\Psi_r^*m) - \mathcal{F}(m)\Phi_r\|_F \right) \text{ for a.e. } m.
\end{equation}
If $\mathcal{F} \text{ and } \mathbf{F}_\theta(\Psi_r^*(\cdot))\Phi_r^* \in L^2(\mathcal{M},\mu; \mathcal{L}(\mathcal{Q},\mathbb{R}^d))$, then
\begin{equation}\label{eq:l2_ipua}
    \mathbb{E}_{m\sim\mu}\left[\|\mathbf{q}_r^\star(m) - \mathbf{q}_r^\ddagger(m) \|_2\right] = \mathcal{O}\left( \sqrt{\mathbb{E}_{m\sim \mu}\left[ \|\mathbf{F}_\theta(\Psi_r^*m) - \mathcal{F}(m)\Phi_r\|_F^2\right]}\right).
\end{equation}
\end{subequations}
Additionally, assuming the map $\mathcal{F}$ has sufficient smoothness (as defined in Appendix~\ref{appendix:theorem-proof}), there exist ReLU neural networks with $\widehat{\mathcal{O}}(N)$ complexity (e.g., depth, size) yielding the following convergence of NEMO approximated inversion,
\begin{equation} \label{eq:no_dim_independence}
    \left( \|\mathbf{q}_r^\star(m) - \mathbf{q}_r^\ddagger(m) \|_2  \text{  or  } \mathbb{E}_{m\sim\mu}\left[\|\mathbf{q}_r^\star(m) - \mathbf{q}_r^\ddagger(m) \|_2\right] \right) =  \mathcal{O}\left(N^{-\alpha}\right),
\end{equation}
where $\alpha>0$ depends on the regularity of the map, and approximation setting, and $\widehat{\mathcal{O}}$ denotes limiting terms discarding logarithmic prefactors. In the setting of infinite-dimensional smooth maps described in Appendix~\ref{appendix:theorem-proof}, these rates are independent of the ambient discretization dimension.
\end{theorem}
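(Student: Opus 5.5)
The plan is to treat the regularized least-squares solution as a locally Lipschitz function of its matrix argument and then push the operator-learning error through it. Write $G(\mathbf{A}) = [\mathbf{A}^T\Gamma^{-1}\mathbf{A} + \gamma I_r]^{-1}\mathbf{A}^T\Gamma^{-1}\mathbf{d}$ for $\mathbf{A}\in\mathbb{R}^{d\times r_q}$. By \eqref{eq:subspace_ip} and \eqref{eq:subspace_ip_nemo}, $\mathbf{q}_r^\star(m) = G(\mathcal{F}(m)\Phi_r)$ and $\mathbf{q}_r^\ddagger(m) = G(\mathbf{F}_\theta(\Psi_r^*m))$. Set $\mathbf{A} = \mathcal{F}(m)\Phi_r$, $\mathbf{B} = \mathbf{F}_\theta(\Psi_r^*m)$, $M_\mathbf{A} = \mathbf{A}^T\Gamma^{-1}\mathbf{A}+\gamma I$, and similarly $M_\mathbf{B}$. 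Since $\gamma>0$ and $\Gamma^{-1}$ is SPD, both $M_\mathbf{A}$ and $M_\mathbf{B}$ are SPD with smallest eigenvalue at least $\gamma$, so $\|M_\mathbf{A}^{-1}\|_2,\|M_\mathbf{B}^{-1}\|_2\le \gamma^{-1}$ uniformly in $m$. This uniform bound carries the whole argument.

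Next, split the difference with the resolvent identity:
\[
G(\mathbf{A})-G(\mathbf{B}) = M_\mathbf{A}^{-1}(\mathbf{A}-\mathbf{B})^T\Gamma^{-1}\mathbf{d} + M_\mathbf{A}^{-1}(M_\mathbf{B}-M_\mathbf{A})M_\mathbf{B}^{-1}\mathbf{B}^T\Gamma^{-1}\mathbf{d},
\]
and expand $M_\mathbf{B}-M_\mathbf{A} = (\mathbf{B}-\mathbf{A})^T\Gamma^{-1}\mathbf{B} + \mathbf{A}^T\Gamma^{-1}(\mathbf{B}-\mathbf{A})$. This gives
\[
\|G(\mathbf{A})-G(\mathbf{B})\|_2 \le \gamma^{-1}\|\Gamma^{-1}\|_2\|\mathbf{d}\|_2\bigl(1 + \gamma^{-1}\|\Gamma^{-1}\|_2(\|\mathbf{A}\|+\|\mathbf{B}\|)\|\mathbf{B}\|\bigr)\|\mathbf{A}-\mathbf{B}\|_F .
\]
A sharper version replaces $\gamma^{-1}\|\mathbf{B}\|$ by the bound $\|M_\mathbf{B}^{-1}\mathbf{B}^T\Gamma^{-1/2}\|\le \tfrac12\gamma^{-1/2}$ obtained from the SVD; then only $\|\mathbf{A}\|+\|\mathbf{B}\|$ survives, and the constant is at most linear in the operator norms.

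\emph{$L^\infty$ case.} We have $\|\mathbf{A}\|\le\|\mathcal{F}(m)\|$ because $\Phi_r$ is an isometry, and $\|\mathbf{B}\| = \|\mathbf{F}_\theta(\Psi_r^*m)\Phi_r^*\|$. Both are essentially bounded, so the constant is uniform for a.e.\ $m$, which yields \eqref{eq:linf_ipua}.

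\emph{$L^2$ case.} The constant $C(m)\le c_0 + c_1(\|\mathcal{F}(m)\|+\|\mathbf{F}_\theta\Phi_r^*\|)$ now belongs to $L^2(\mu)$. Taking expectations and applying Cauchy--Schwarz, $\mathbb{E}[C\,e]\le \|C\|_{L^2}\,\|e\|_{L^2}$, which gives \eqref{eq:l2_ipua}. The main technical point is making sure the constant grows only linearly, not quadratically, in the operator norms; otherwise a quadratic constant would require $L^4$ control. This is exactly why the sharper SVD bound is used. It also makes $\Gamma^{-1}\mathbf{d}$ a fixed factor.

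For the rates \eqref{eq:no_dim_independence}, the reduction above shows it suffices to approximate the map $\mathbf{m}_r\mapsto \mathcal{F}(\Psi_r\mathbf{m}_r)\Phi_r$, or $m\mapsto\mathcal{F}(m)\Phi_r$, in the $L^\infty$ or $L^2$ Frobenius norm at rate $N^{-\alpha}$. I would invoke existing ReLU approximation theory componentwise on the $d r_q$ output entries:
\begin{itemize}
\item For finite-dimensional $C^k$ or Sobolev maps on a compact set, use Yarotsky-type results, giving $\alpha = k/r_m$ with size $\widehat{\mathcal{O}}(N)$.
\item For the infinite-dimensional setting, assume the map is $(b,\epsilon)$-holomorphic, or a parametric map with $\ell^p$-summable coefficients, in the $\Psi$ coefficients. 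Then use the results of Schwab--Zech / Herrmann et al.\ / Kovachki et al.\ for reduced-basis operator networks, which give $\alpha = 1/p - 1$ (or $1/p-1/2$ in $L^2$) independent of the truncation and discretization dimensions.
\end{itemize}
Composing with the Lipschitz bound transfers the rate. The main obstacle is verifying that the truncation error $\mathcal{F}(m)\Phi_r$ versus $\mathcal{F}(\Psi_r\Psi_r^*m)\Phi_r$ is also absorbed at rate $N^{-\alpha}$. I would handle this by coupling $r_m$ to $N$ through the coefficient summability, as in the cited holomorphy results. I would also need to verify that the resulting networks remain in $L^\infty$/$L^2$, so the first part applies, by clipping the outputs.
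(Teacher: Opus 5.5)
Your proof is correct. The rate part follows the paper, but your core perturbation estimate takes a different, weaker route.

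\textbf{Consistency part.} The paper interpolates $\mathbf{F}_t=\mathcal{F}(m)\Phi_r+t\mathbf{H}$, differentiates the normal equations in $t$, and bounds $\partial_t\mathbf{q}_t$ using three facts valid along the whole path:
\begin{itemize}
\item $\|M_t^{-1}\|_2\le\gamma^{-1}$;
\item the filter-factor bound $\|M_t^{-1}\mathbf{F}_t^{*}\Gamma^{-1/2}\|_2\le\frac{1}{2\sqrt{\gamma}}$;
\item comparison with $\mathbf{q}=0$, which gives $\|\mathbf{d}-\mathbf{F}_t\mathbf{q}_t\|_{\Gamma^{-1}}\le\|\mathbf{d}\|_{\Gamma^{-1}}$ and $\|\mathbf{q}_t\|_2\le\gamma^{-1/2}\|\mathbf{d}\|_{\Gamma^{-1}}$.
\end{itemize}
Integrating over $t$ yields $C=\frac{3}{2}\sqrt{\|\Gamma^{-1}\|_2}\,\|\mathbf{d}\|_{\Gamma^{-1}}/\gamma$. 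This constant is independent of $m$, of $\|\mathcal{F}(m)\|$, and of the network. The $L^2$ statement is then just Jensen, and the $L^\infty$/$L^2$ hypotheses serve only to make the right-hand sides finite.

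Your resolvent-identity grouping leaves a factor $\|\mathbf{A}\|+\|\mathbf{B}\|$. As a result you genuinely need the hypotheses and Cauchy--Schwarz, and your $\mathcal{O}$-constant depends on $\|\mathbf{F}_\theta(\Psi_r^*\cdot)\Phi_r^*\|_{L^p_\mu}$. That is harmless, because $\|\mathbf{B}\|\le\|\mathbf{A}\|+\|\mathbf{A}-\mathbf{B}\|$; this same inequality also makes your clipping step unnecessary. But the linear-versus-quadratic growth you flag as the main technical point is moot: no growth is needed at all.

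Your two-point framework recovers the paper's uniform constant without any homotopy. Regroup using the normal equation $\gamma\mathbf{q}_{\mathbf{B}}=\mathbf{B}^T\Gamma^{-1}(\mathbf{d}-\mathbf{B}\mathbf{q}_{\mathbf{B}})$:
\[
G(\mathbf{A})-G(\mathbf{B}) = M_{\mathbf{A}}^{-1}\bigl[(\mathbf{A}-\mathbf{B})^T\Gamma^{-1}(\mathbf{d}-\mathbf{B}\mathbf{q}_{\mathbf{B}}) - \mathbf{A}^T\Gamma^{-1}(\mathbf{A}-\mathbf{B})\mathbf{q}_{\mathbf{B}}\bigr].
\]
The same three bounds, now applied at the endpoint $\mathbf{B}$ and to $M_{\mathbf{A}}^{-1}\mathbf{A}^T\Gamma^{-1/2}$, give exactly $\frac{3}{2}\sqrt{\|\Gamma^{-1}\|_2}\,\|\mathbf{d}\|_{\Gamma^{-1}}\gamma^{-1}\|\mathbf{A}-\mathbf{B}\|_2$. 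Your route buys a purely algebraic argument with no differentiation. The paper's route, or this regrouping, buys a constant that is uniform in $m$ and in the network.

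\textbf{Rate part.} Your plan matches the paper. In the finite-dimensional case the paper approximates componentwise without input truncation, using Petersen--Zech Thm.~7.10 (a Yarotsky-type result). It stacks the $dr_q$ scalar networks, padding shorter ones with ReLU identity layers to equalize depth. In the infinite-dimensional case it uses the holomorphy results of Herrmann et al., with $r(N)$ allowed to grow with $N$. That growth is precisely how the truncation issue you raise is absorbed.
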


See Appendix~\ref{appendix:theorem-proof} for the proof of this result.
Theorem~\ref{theorem:ip_ua} shows that the NEMO subspace inverse problem error, i.e. $\| \mathbf{q}_r^\star - \mathbf{q}_r^\ddagger\|_2$, is of the same order as the reduced p2o operator approximation error. This demonstrates the consistency of the NEMO learning problem and the inverse problem. Additionally, in the infinite-dimensional smooth map setting of Appendix~\ref{appendix:theorem-proof},
the required complexity of the NEMO neural network to achieve a particular subspace inverse problem accuracy is independent of the ambient discretization dimension, with the rate of convergence being determined by the smoothness of the map. We conclude that the NEMO approximation accuracy ensures consistent inverse problem solution accuracy, and the NEMO discretization-independence provides scalability for high-dimensional PDE problems. 

\paragraph{Uncertainty quantification}

In addition to the closed-form inverse solution, under interpretation of the regularization term as a Gaussian prior, the exact posterior covariance of the subspace inverse problem using the NEMO approximation has a closed-form expression, given by the inverse Hessian of the subspace least-squares objective,
\begin{equation}\label{eq:posterior-covariance}\mathbf{H}_r^{-1} = [\mathbf{F}_\theta(\Psi_r^*m)^*\Gamma^{-1}\mathbf{F}_\theta(\Psi_r^*m) + \gamma I_r]^{-1}.
\end{equation}
This covariance matrix is in $\mathbb{R}^{r_q\times r_q}$ and can be rapidly computed for real-time uncertainty quantification, and can be lifted to the original space via the basis expansion operator $\Phi_r$, yielding the covariance of the induced posterior supported on the reduced subspace. The availability of the posterior enables rapid uncertainty propagation, probabilistic analysis, and downstream informed decision-making within digital twin frameworks.

\section{Results}\label{sec:results}

NEMO outputs a reduced approximation of the true p2o operator $\mathcal{F}(m)$, given reduced bases $\Phi_r, \Psi_r$ for the inversion and model parameters, respectively. The approximation  $\mathbf{F}_\theta(\cdot)$, is used to obtain a closed-form solution of the inverse problem in the reduced subspace, $\mathbf{q}_r^\ddagger$. To assess the performance of NEMO, we test the accuracy of the NEMO output, and the associated inverse problem solution quality. 
To evaluate the subspace inverse problem solution, we compute the relative error for the subspace inverse problem solution between the true reduced p2o operator solution $\mathbf{q}_r^\star$ and the NEMO solution $\mathbf{q}_r^\ddagger$, given by
\begin{equation}
    \epsilon_{\mathbf{q_r}} = \frac{ \| \mathbf{q}_r^\ddagger - \mathbf{q}_r^\star\|_2}{\| \mathbf{q}_r^\star\|_2}
\end{equation}
We demonstrate the performance of NEMO in two PDE-based inverse problem applications: (1) contaminant transport initial condition identification, and (2) aerodynamic pressure load identification for hypersonics. All PDE solutions are solved with FEniCS~\cite{fenics}, while neural network training is performed using PyTorch. All NEMO models are fully-connected multi-layer perceptrons with ReLU or similar activation functions. The results are presented for a test set of inversion and model parameters which were not used in training NEMO.

\subsection{Contaminant transport initial condition identification} \label{section:contaminant}

We consider the goal of creating a digital twin for real-time emergency response to an airborne contaminant on the University of Texas at Austin campus. To build such a digital twin, we require the capability of rapid data assimilation to identify the conditions of the contaminant release. Mathematically, this task is posed as an inverse problem, where data from sparse sensor measurements are used to infer the initial condition of the contaminant. This would enable the digital twin to perform forecasting of the contaminant spread~\cite{akccelik2005dynamic, lieberman2013hessian} and inform subsequent action. To model the contaminant spread, we consider the advection-diffusion equation of the contaminant concentration $u(x,t)$ on domain $\Omega$ with boundary $\partial\Omega$ given by
\begin{equation}
    \frac{\partial u}{\partial t} + m\cdot \nabla u - \kappa \Delta u  = 0 \quad \text{in } \Omega \times (0,T)  \qquad  u(x,0) = q, \quad \nabla u(x,t) \cdot n = 0  \text{  on  } \partial \Omega \times (0,T).
\end{equation} 
where $m$ are the model parameters representing the wind velocity field, $q$ are the inversion parameters representing the unknown initial condition, and $\kappa$ is the diffusion coefficient. The domain is scaled such that 1 unit in the computational domain is equivalent to 100 meters. The tetrahedral mesh was produced with gmsh~\cite{geuzaine2009gmsh}, with 1,087,402 degrees of freedom.  The implementation for solving the PDE system is adapted from the advection-diffusion Bayesian inverse numerical example in \texttt{hIPPYlib} \cite{VillaPetraGhattas18}, which we refer to for further solver implementation details. We simulate up to time $T$ with 60 time steps. The observation locations were chosen manually, located on existing campus features. The contaminant observations consist of 50 sparse point measurements of the contaminant concentration, taken at three equally-spaced time instances $\frac{T}{3}, \frac{2T}{3}, T$, resulting in 150 total observations. Figure~\ref{fig:contaminant-problem} depicts the problem setup\footnote{Image credit (top left): Satellite view at 30°17'07.56"N 97°44'22.02"W, Google Earth, accessed 2025.}, including the domain, observations, and evolution in time.

\begin{figure}[h!]
    \centering
    \includegraphics[width=0.95\linewidth]{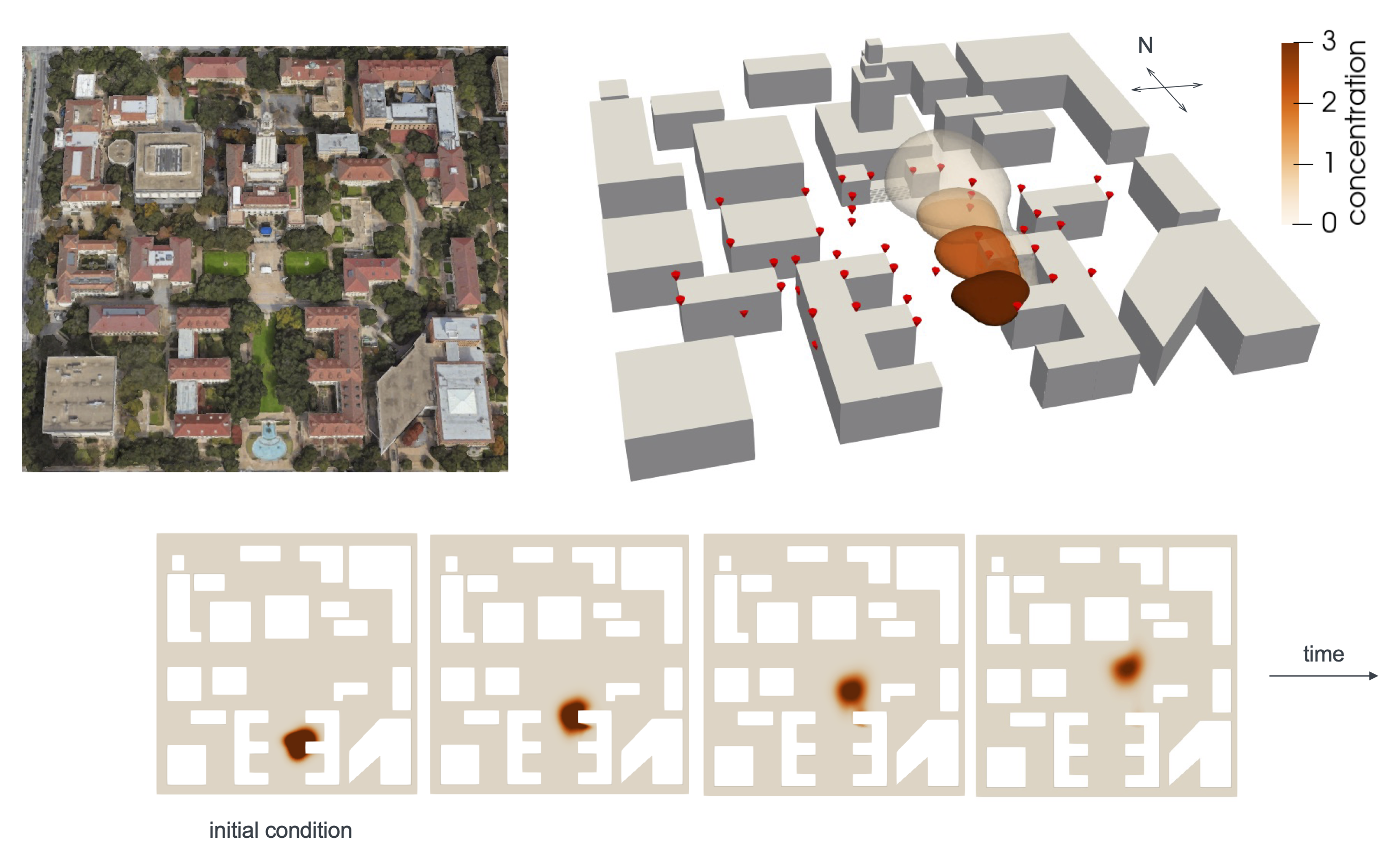} 
    \caption{\textbf{Contaminant initial condition transport.} We show the bird's-eye view of the Forty Acres at the University of Texas Austin campus (top left) and the corresponding computational representation (top right), with sensor locations depicted by the red markers. Iso-surfaces of an example non-dimensional contaminant initial condition and snapshots at times $t=\frac{T}{3}, \frac{2T}{3}, T$ for a given wind velocity field are illustrated in the computational domain, as well as the corresponding snapshots on a representative 2D slice of the domain (bottom).}
    \label{fig:contaminant-problem}
\end{figure}

In our problem setup, the wind velocity field is provided through external means (e.g. obtained via current weather models or measured via in situ sensors), but is not known ahead of time since it is dynamically changing.  For training NEMO, we compute representative velocity fields by solving a potential flow problem with parameterized boundary conditions. Sample ranges for these parameters were chosen to produce wind realizations that are representative of historical wind data \cite{winddata}, as well as realistic values of the P\'{e}clet number for atmospheric conditions. We solve the potential flow problem for 500 parameter samples and compute the reduced basis $\Psi_r$, choosing the model parameter subspace dimension $r_m = 3$. For the contaminant initial condition, we produce representative initial condition samples using clustered Gaussian distributions. The reduced basis $\Phi_r$ is computed from 1000 initial condition samples, and we choose the inversion parameter subspace dimension $r_q=50$. Figure~\ref{fig:contaminant-samples} depicts representative samples of the initial condition and wind velocity fields. We refer to Appendix~\ref{appendix:numerics-contaminant} for further numerical details on the initial condition and wind velocity sampling. Synthetic observation data are produced by solving the forward PDE for test initial conditions to obtain model observations, which are then corrupted with 1\% Gaussian noise.

\begin{figure}[h!]
    \centering
    \includegraphics[width=0.95\linewidth]{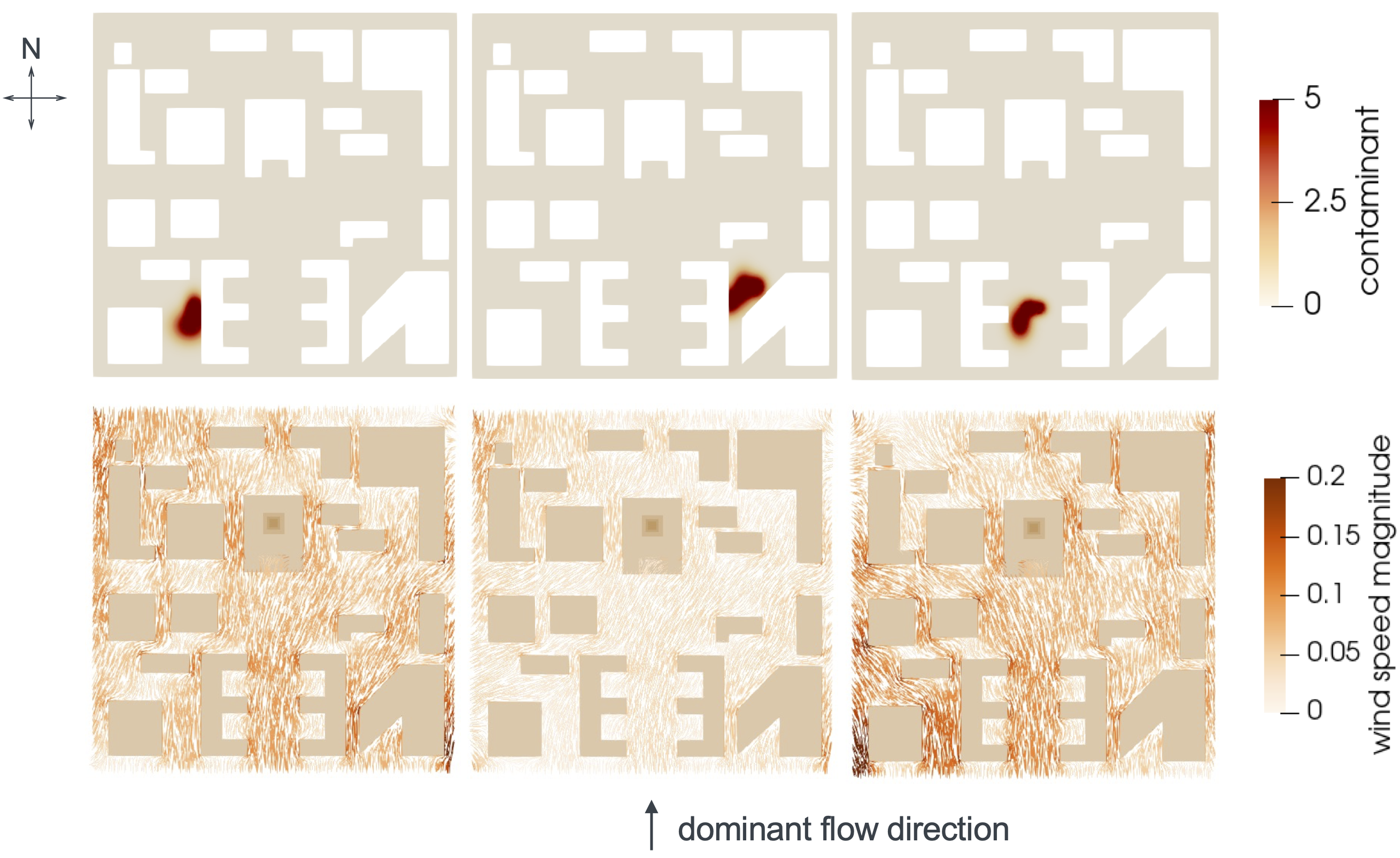}
    \caption{\textbf{Contaminant initial condition and wind velocity examples.} We illustrate example samples of representative 2D slices of the non-dimensional contaminant initial conditions (top) and wind velocity fields (bottom) which are used to compute a reduced basis for the inversion and model parameters, respectively.}
    \label{fig:contaminant-samples}
\end{figure}

We learn the reduced p2o operator $\mathbf{F}_\theta \in \mathbb{R}^{150 \times 50}$, resulting in an output dimension of $7500$. In this problem the NEMO has input dimension $r_m = 3$ with five hidden layers of dimensions (10, 500, 500, 1000, 2000) and utilizes GELU activation functions with fully connected layers. The models are trained for 10000 epochs using the PyTorch Adam optimizer with an initial learning rate of $5\times 10^{-4}$ and batch size 64, and using the PyTorch \texttt{ReduceLROnPlateau} adaptive learning rate with decay factor of $0.9$ and patience of 50 epochs. For training, we use a set of 400 model parameter samples for training/validation. We train the model using varying amounts of training data, and repeat the training over five independent experiments. Each experiment uses a different network parameter initialization and training data split. The test set consists of 100 independent samples of both the initial condition and velocity field that were not used in NEMO training. We chose the regularization parameter $\gamma$ based on the L-curve selection criteria for the given noise level on average on the test set. 

In Figure~\ref{fig:contaminant-nemo-performance}, we show the relative Frobenius error of the NEMO reduced p2o operator. The results are obtained from five separate training experiments evaluated on the test set. As the amount of training data increases, we observe smaller relative errors, demonstrating improved approximation of the true reduced p2o operator. For the inverse problem, we compare the NEMO inverse solution against the true reduced p2o operator solution, selecting the median-performing model from the five experiments. Figure~\ref{fig:contaminant-nemo-performance} also shows the relative errors in the subspace inverse problem solution for NEMO, $\mathbf{q}_r^\ddagger$, compared to the true reduced p2o operator solution, $\mathbf{q}_r^\star$. The NEMO inverse problem solution errors decrease as the approximation error decreases, demonstrating consistency between the inverse problem and the reduced p2o approximation performance. We visualize representative examples of ground truth initial conditions, and the corresponding inverse problem solutions using both the true reduced p2o operator and NEMO in Figure~\ref{fig:contaminant-nemo-performance}. We note that the colorbar range was truncated for visual clarity. Table~\ref{tab:stats} summarizes the problem dimensions and speedup of NEMO compared to evaluating the true reduced p2o operator using the PDE solver. NEMO obtains more than three orders of magnitude speedup, demonstrating its ability to accelerate inverse solutions for real-time applications.

\begin{figure}[h!]
    \centering
    \includegraphics[width=0.99\linewidth]{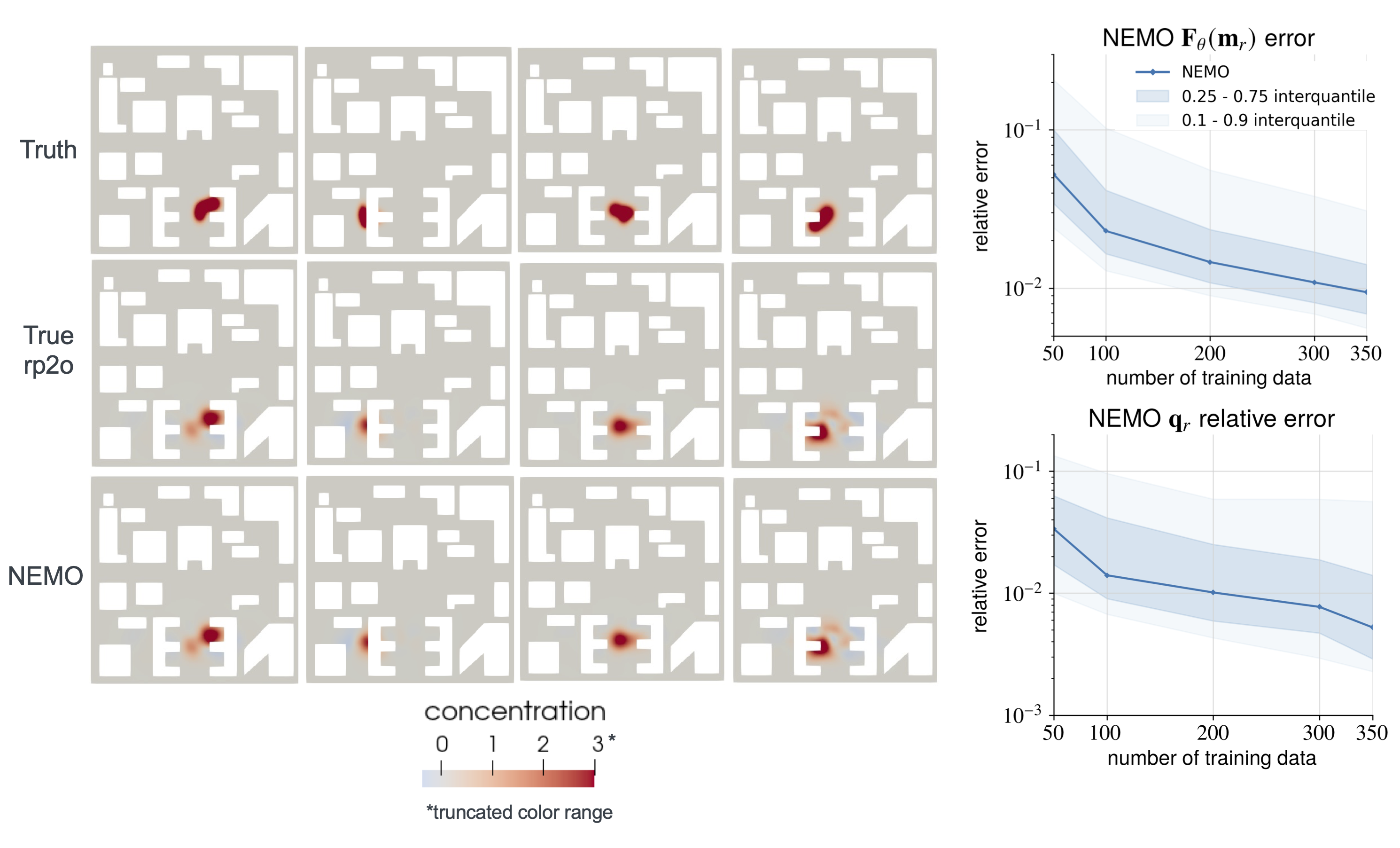}
    \caption{\textbf{Contaminant inversion with NEMO.} (Left) Examples of ground truth initial conditions, compared to the reconstructed initial conditions from the true reduced p2o and NEMO inverse problem solutions on a slice of the domain. The true reduced p2o and NEMO inverse problem solutions are nearly identical, and we observe good localization of the true contaminant initial condition. (Right) Relative errors for NEMO reduced p2o operator approximation, and subspace inverse problem solution. The NEMO subspace solution approaches the true reduced p2o operator solution as the NEMO approximation improves. }
    \label{fig:contaminant-nemo-performance}
\end{figure}

\begin{table} [h!]
\renewcommand{\arraystretch}{1.4}
\begin{center}
\caption{\textbf{Problem size and runtime statistics summary.} The true reduced p2o operators are computed in parallel on an AMD EPYC 7H12 with 64 cores using the PDE solver. NEMO is evaluated on a single core on the same machine, and obtains approximately 3-4 orders of magnitude speedup over the PDE solver. This highlights the speedup advantages of NEMO for real-time applications. }\setlength\tabcolsep{3mm}
\begin{tabular}{ cccccccc} \label{tab:stats}
\multirow{2}{*}{Problem}  & \multirow{2}{*}{$r_q$} & \multirow{2}{*}{$r_m$} & \multirow{2}{*}{$d$} & \multirow{2}{*}{\# NEMO parameters} 
& \multicolumn{2}{c}{\hspace{-3mm} Time to compute $\mathcal{F}(m)\Phi_r$} &  \multirow{2}{*}{Speedup} \\ 
 & & & & 
 & NEMO & True rp2o  &  \\ 
 \hline
 
 Contaminant & 50 & 3 & 150 & 1.78 x $10^7$ & 0.012 s  &  541 s &  $>$ 45,000  \\
 Hypersonic & 20 & 3 & 108 & 5.51 x $10^6$ & 0.006 s & 5.6 s & $ > $ 900 
\end{tabular} 
\end{center}
\end{table}

\subsection{Hypersonic aerodynamic pressure estimation}\label{section:aero}

We seek to create a digital twin of an aerospace vehicle that can provide adaptive real-time guidance and control. In this application, the aerodynamic pressure loads acting on the vehicle comprise key aerodynamic quantities of interest for guidance and control of the vehicle. To enable the desired digital twin, we require methods to rapidly characterize these pressure loads through real-time data assimilation during flight. For this example, we consider the hypersonic environment, where extreme temperatures on the external surface of the vehicle inhibit direct measurement of the pressure. To address this limitation, we seek to estimate the aerodynamic pressure field from measurements of the structural strain response -- a task mathematically posed as an inverse problem. A further challenge for hypersonics is that the extreme temperatures cause variations in the material properties of the structure.  Here, NEMO addresses the spatially varying, temperature-dependent material properties to enable real-time inversion for the aerodynamic loads.

In this application, the model parameters $m$ represent the temperature field, and the inversion parameters $q$ represent the surface pressure field. Assuming a quasi-static elastic response with small deformations, the displacement $u(x)$ is governed by 
\begin{equation}
    -\nabla \cdot \sigma(u) = f  \qquad u(x) = 0 \text{ on } \partial\Omega_\text{aft}
\end{equation}
where $\sigma$ is the stress tensor, $f=Cq$ maps the surface pressure inversion parameters $q$ to forces $f$, and $\partial\Omega_\text{aft}$ is the aft boundary of the vehicle. The constitutive law is given by 
\begin{equation}
    \sigma = \lambda(m) \text{tr}(\varepsilon)I + 2\mu(m)\varepsilon 
\end{equation}
where $\lambda(m), \mu(m)$ are the temperature-dependent material properties (Lam\'e parameters). The strain-displacement relation is given by $\varepsilon = \frac{1}{2}(\nabla u ^\top +  \nabla u)$. Here, we assume the (pressure) inversion parameter-induced strain is measured in isolation from thermal strain. The vehicle we consider is the IC3X conceptual hypersonic vehicle~\cite{witeof2014initial}, for which the strain-based estimation approach without temperature effects was first introduced in~\cite{pham2025}. The structural model is a modified geometry with no fins and hollow internal structure. The finite element model has a tetrahedral mesh produced using gmsh~\cite{geuzaine2009gmsh}, and contains 461,664 degrees of freedom, with fixed boundary conditions applied on the aft end of the vehicle. The observations $\mathbf{d}$ are pointwise measurements of the pressure-induced strain. The sensors are placed with even spacing between $x=[1.8, 3.4]$ with $0.2$ spacing, for a total of 9 $x$-locations. At each $x$-location, 8 sensors are evenly spaced around the inner circumference measuring in the streamwise direction, and an additional 4 sensors are evenly spaced measuring in the circumferential direction, for a total of 108 sensors. The subspace for the pressure loads is computed using Equation~\ref{eq:goal_oriented_F_reduction} with simulated pressure data over a range of flight conditions using CART3D~\cite{cart3d}, which solves the inviscid Euler equations. The flight conditions considered are Mach number in range $[5, 7]$, and angle of attack and sideslip angle in range $[-8, 8]$, at a constant altitude. We choose to retain $r_q = 20$ modes. For the temperature fields, we consider three modes which capture spatially-varying temperature fields as a result of heating related to the Mach number, angle of attack, and sideslip angle. As a result, the subspace dimension $r_m = 3$ by construction.

We learn the reduced p2o operator $\mathbf{F}_\theta \in \mathbb{R}^{108 \times 20}$, thus the output dimension is $2160$. For this problem, the NEMO has input dimension $r_m = 3$, with six hidden layers of dimension (10, 100, 300, 500, 1000, 2160) and utilizes softplus activation functions with fully connected layers. The models are trained for 2000 epochs using the PyTorch Adam optimizer with an initial learning rate of $5\times 10^{-4}$ and batch size 64, and using the \texttt{ReduceLROnPlateau} adaptive learning rate with decay factor of $0.95$. The training data are produced by sampling the pressure and temperature field reduced coordinates to obtain 1052 total samples, of which 242 samples were reserved for validation. In this problem, standard scaling was also used for the outputs. Training was conducted for varying amounts of training data and was repeated for five independent experiments, where each experiment utilized a different network parameter initialization.

\begin{figure}[h!]
    \centering
    \includegraphics[width=0.99\linewidth]{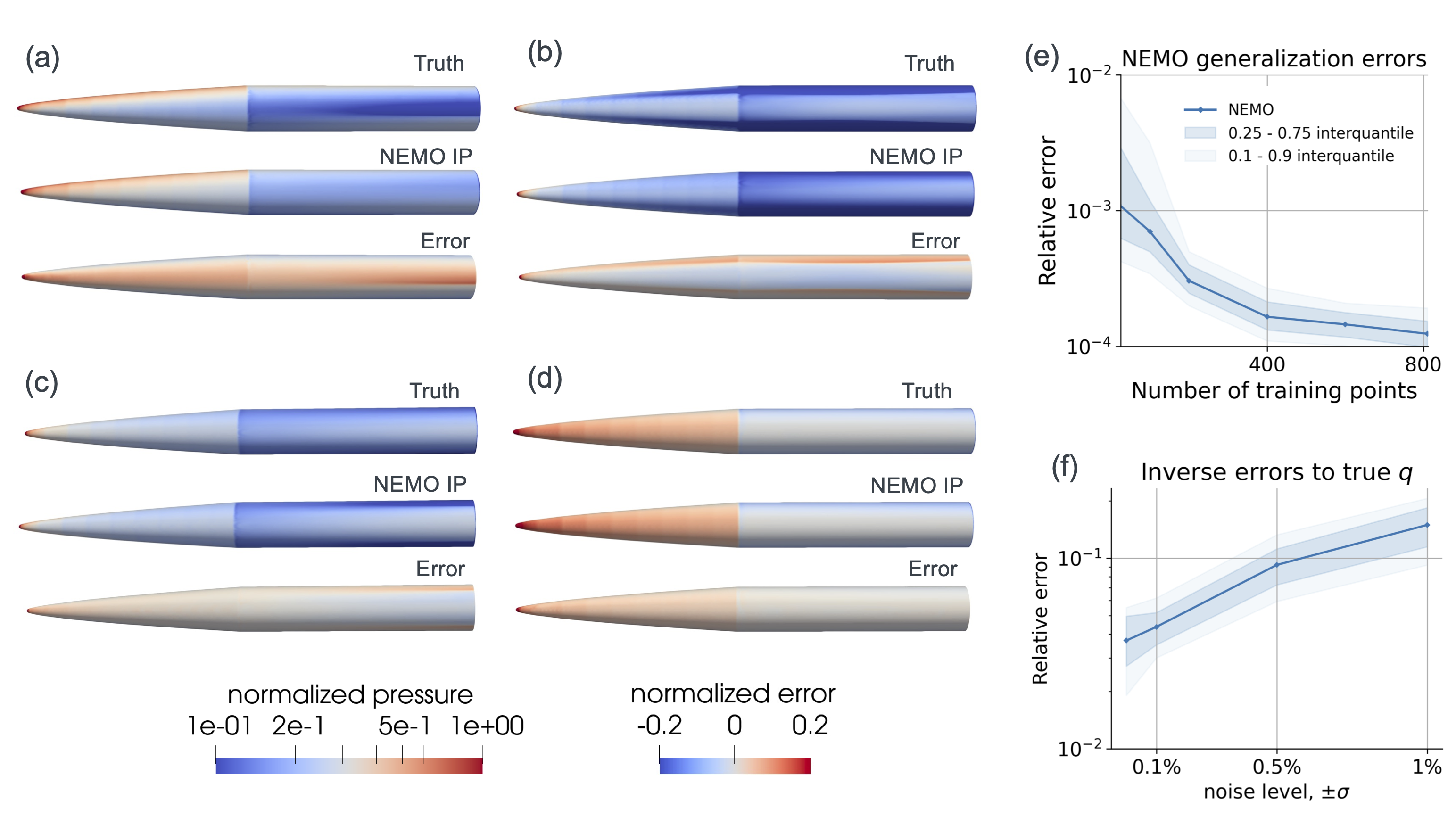}
    \caption{\textbf{Hypersonic aerodynamic pressure inversion with NEMO. (a-d)} Examples of ground truth pressure fields, compared to the corresponding reconstructed pressure from the NEMO inverse problem solution, and corresponding normalized error. The pressure is normalized such that the truncated visible upper range is one, and the relative error is computed pointwise. We observe the errors occur at regions of discontinuity on the true pressure field, due to the smoothing nature of the regularization in the inverse problem. \textbf{(e)} Relative errors in generalization for NEMO with respect to the true $\mathcal{F}(m)\Phi_r$. \textbf{(f)} Relative reconstructed inverse solution errors with respect to the truth, for various noise levels. }
    \label{fig:aero-ip-results}
\end{figure}

The test set consists of 50 pressure fields computed using the CFD model with flight conditions within the flight condition ranges stated above. Combinations of these 50 test pressures and 100 model parameter test samples are used to produce the test observation set, resulting in 5000 total test samples. We demonstrate the NEMO performance in Figure~\ref{fig:aero-ip-results}. First, we examine the NEMO relative Frobenius errors on the test set. Figure~\ref{fig:aero-ip-results}(e) shows the median relative errors, with interquantile ranges capturing the variation over the test set. These results are averaged over five independent training experiments. We observe decreasing relative errors with more training data, indicating improved approximation of the true reduced p2o operator. Second, for the inverse problem, we demonstrate the NEMO performance using the best-case model for three different noise levels: 0.1\%, 0.5\%, and 1\%, with regularization parameter $\gamma$ chosen separately for each noise level. Figure~\ref{fig:aero-ip-results}(f) shows the relative error of the NEMO inverse solution pressure field reconstruction compared to the true surface pressure field on the test set. We observe that the NEMO inverse solution achieves low relative errors, particularly in the low-noise regime. As the observational noise increases, we observe deteriorating performance benefits since the observation noise becomes the dominating error, requiring increased regularization in the inverse solution. Examples of the NEMO pressure field reconstructions for different test samples are visualized in Figure~\ref{fig:aero-ip-results}(a-d). Again, Table~\ref{tab:stats} reports the model size and speedup comparisons, demonstrating three orders of magnitude speedup compared to evaluating the true reduced p2o operator. For hypersonic vehicles, real-time is considered to be the typical onboard measurement frequency of 100 Hz. Here, NEMO evaluation meets this requirement on the given hardware, and can be deployed on optimized hardware to achieve the required real-time evaluation speeds.

\subsection{NEMO physics structure advantages compared to existing state-of-the-art}

\subsubsection{Forward surrogate comparison}

To assess the benefits of the closed-form solution of the NEMO method, we provide a comparison of the online computational cost compared to state-of-the-art neural operator methods for learning forward PDE surrogate models. As a representative case study, we select the multiple-input neural operator network (MIONet)~\cite{jin2022mionet}. The MIONet is a multiple-input variant of the state-of-the-art deep operator network (DeepONet)~\cite{lu2021} for forward PDE surrogates, which enables management of both the model and inversion parameter inputs in our problem formulation. The MIONet model, denoted by $\mathcal{G}_\theta$, consists of two branch networks for each input function in their reduced coordinate form, with equal-sized outputs that are combined via Hadamard (element-wise) product. The trunk network of MIONet is imposed as a linear layer which maps to the finite-dimensional observations, such that $\mathbf{d} = \mathcal{G}_\theta(\Phi_r^*q, \Psi_r^*m)$. To solve the inverse problem with $\mathcal{G}_\theta$, we solve a deterministic optimization problem with an equivalent subspace objective function using L-BFGS with a specified termination tolerance; further details on the comparison can be found in Appendix~\ref{appendix:comparison}. We note that the MIONet example is representative of other architectures for forward surrogates, which will also require solving the optimization online.

We assess both inverse problem performance and real-time computational cost to compare NEMO and MIONet. For both models, we observe comparable inverse problem reconstruction performance on the test set, since both models are sufficiently trained and utilize the same reduced basis. However, the optimization process using MIONet incurs longer online runtimes, and the computational complexity also varies over the test set measurements and regularization parameter $\gamma$ used. NEMO, in contrast, has a fixed computational cost that is independent of the measurements or parameters. Table~\ref{tab:comparison} summarizes the inverse problem timing results with both models on an NVIDIA Grace Hopper GH200 CPU; we note that on the GPU, the NEMO time improves even further due to parallelization of the matrix operations, while the optimization does not improve, since parallelization of the relatively small MIONet model is less significant than the increased overhead latency. 

We emphasize that the MIONet example is representative of other architectures for forward surrogates, which will also require solving the optimization online. Different architectures may change the cost of the forward evaluation, but does not impact the convergence behavior of the optimization (given that the forward map is sufficiently learned) as shown in Table~\ref{tab:comparison}, since the convergence behavior is dependent on the inverse parameter landscape.

\begin{table} [h!] 
\renewcommand{\arraystretch}{1.2}
\begin{center}
\caption{\textbf{State-of-the-art neural operator online cost comparison.} The NEMO inverse solution has a low, fixed online computational cost, while the MIONet model requires solving a nonlinear online optimization problem that does not have convergence guarantees, and costs vary with the regularization parameter $\gamma$, which changes the complexity of the inverse problem. The timing results were obtained on an NVIDIA Grace Hopper GH200 CPU. We observe speedups of more than an order of magnitude with NEMO over MIONet-based inversion. \vspace{3mm}}\setlength\tabcolsep{5mm}
\begin{tabular}{ ccc @{\hspace{0.75\tabcolsep}} c @{\hspace{0.75\tabcolsep}} cc} 
\multirow{2}{*}{Problem}  & \multirow{2}{*}{NEMO time (s)} & \multicolumn{3}{c}{Optimization with MIONet}  & \multirow{2}{*}{NEMO Speedup} \\ 
& & $\gamma$ &  avg \# iterations  & avg time (s) \\ 
 \hline
 \addlinespace[5pt]
 
 Contaminant & 0.0021 & 1e-1  &  15.1 &  0.022 & $ > $ 10  \\
                  & & 1e-2  &  28.8 &  0.037 & $ > $ 17  \\ 
                  & & 1e-3  &  51.6 &  0.077 &  $ > $ 36 \\ 
                  & & 1e-4  &  73.6 &  0.119 &  $ > $ 56\vspace{3mm} \\
             
 Hypersonic & 0.0017 & 1e-1  &  25.6 &  0.0336 & $ > $ 19  \\
                  & & 1e-2  &  35.1 &  0.0421 &  $ > $ 24  \\ 
                  & & 1e-3  &  43.3 &  0.0547 &  $ > $ 32 \\ 
                  & & 1e-4  &  44.5 &  0.0583 &  $ > $ 34\\
\label{tab:comparison}
\end{tabular} 
\end{center}
\end{table}

\subsubsection{Direct inverse map comparison}

Learning the direct inverse map is challenging due to the fact that learning the map from the observations to the inversion parameters is highly ill-posed. To address this challenge, regularization must be imposed on the direct inverse learning problem. This can be achieved through tuning the architecture of the learned inverse map, or by using training data that consists of regularized inverse solutions using PDE.
In the first case, there is a significant lack of interpretability since the regularization is imposed through the neural network architecture, rather than from a physics-informed perspective. In the second case, the problem structure is not being exploited, since a neural network is used to learn the least-squares solution for which we have a closed-form expression. 
Further, in both cases, the regularization must be built into the learned inverse map. This means that if the noise in the data changes, the model must be re-learned. NEMO, on the other hand, allows for principled tuning of the regularization parameter.

To illustrate, we train a direct inverse MIONet model which maps the observations and reduced model parameters to the true reduced inversion parameter solution, $\mathbf{q}_r^\star$. Figure~\ref{fig:comparison-direct-inverse} shows the distribution of the test relative error to the true reduced inverse problem solution $\mathbf{q}_r^\star$, with two levels of regularization $\gamma_1, \gamma_2$, where $\gamma_1 > \gamma_2$, for NEMO, the forward MIONet, and the direct inverse MIONet. The direct inverse MIONet is trained using the subspace inverse solutions which are regularized with $\gamma_1$. The MIONet comparisons are trained with the number of training data points equivalent to the number of forward solves required to produce the NEMO training data, i.e. $r_q$ times the number of samples of $m$ used for training NEMO. Further details on the direct inverse comparison can be found in Appendix~\ref{appendix:comparison}. First, observe that NEMO achieves the lowest errors compared to both MIONet models for the same amount of training data. However, all models achieve a majority of relative errors below 10\%. Second, we observe that for the inverse MIONet, the errors with $\gamma_2$ increases much more than the increase in the other models.  
This is because the regularization is fixed with the learned model, and a new model must be retrained if different noise levels emerge or different regularization is desired.  This shows the limited adaptability and lack of interpretability in the directly learned inverse map. We note that the evaluation of the direct inverse map is rapid, comparable to NEMO, with no online optimization required. The resulting inverse reconstruction error for the hypersonic example is also shown in Figure~\ref{fig:comparison-direct-inverse}. We observe the NEMO and forward MIONet reconstruction performance are comparable, although we emphasize the MIONet online cost is larger, as shown in Table~\ref{tab:comparison}. The direct inverse MIONet also performs similarly to NEMO at $\gamma_1$, however, the increased $\mathbf{q}_r^\star$ errors at $\gamma_2$ leads to increased reconstruction errors.

\begin{figure}[h!]
    \centering
    \includegraphics[width=0.99\linewidth]{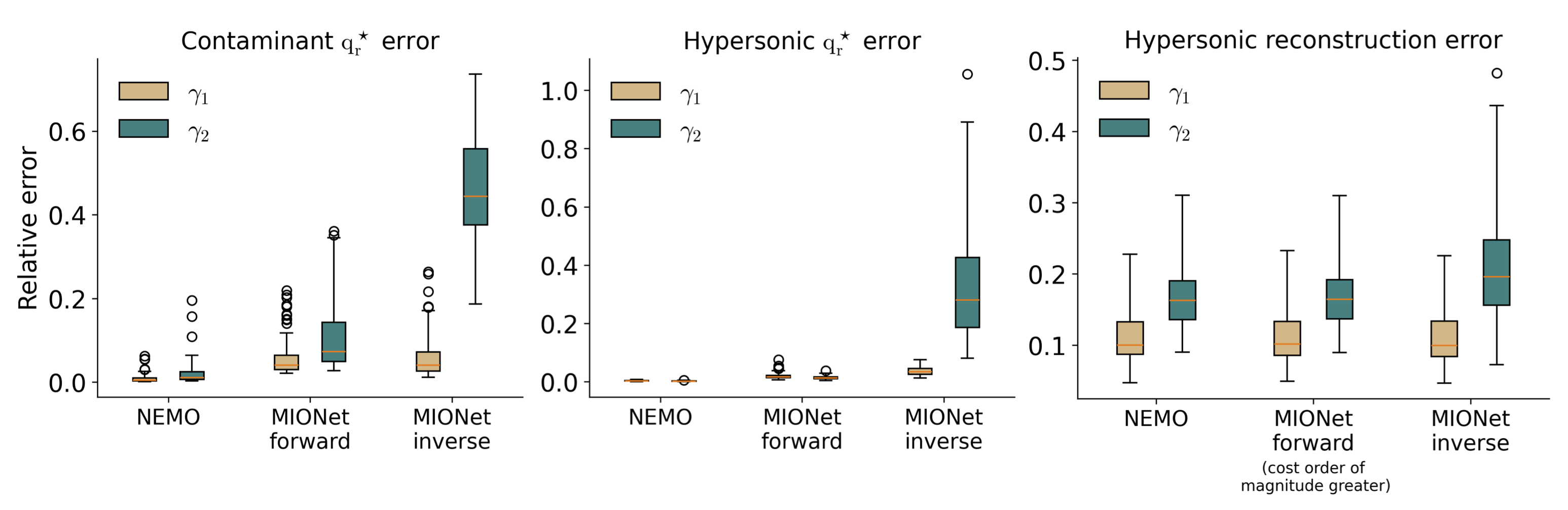}
    \caption{\textbf{Direct inverse map comparison.} (Left, middle) The direct inverse map is not robust to varying noise and regularization levels. We observe increased relative errors in the optimal reduced inversion parameters, $\mathbf{q}_r^\star$, with a change in the regularization parameter $\gamma_2$ compared to NEMO or the MIONet forward surrogate. (Right) The increased error in $\mathbf{q}_r^\star$ leads to increased reconstruction errors, as shown for the hypersonic example. }
    \label{fig:comparison-direct-inverse}
\end{figure}

\subsubsection{Uncertainty quantification}
Further, NEMO provides a direct means of uncertainty quantification for the inversion parameters. In the Bayesian setting, interpreting the regularization term as a Gaussian prior, the exact subspace posterior covariance for our target class of problems is given by the inverse Hessian of the subspace least-squares objective that characterizes the inverse problem. As a result of the NEMO formulation, this posterior covariance for the reduced inversion parameters has a closed-form expression that can be rapidly evaluated with the NEMO approximation. The eigenvectors of the posterior covariance provide the solution modes that are best informed by the data, visualized in Figure~\ref{fig:posterior-modes}. 
In contrast, generic nonlinear forward surrogate neural operator approaches typically require iterative sampling procedures (e.g. Markov Chain Monte Carlo), or linearization, to characterize the posterior. The subspace posterior provided by NEMO enables uncertainty propagation with live data streams, such as in Kalman filtering. This explicit characterization of the posterior enables risk-aware inference for noisy data, supports optimal experimental design by informing which sensors maximally reduce uncertainty, and provides probabilistic state estimates that enable broader digital twin frameworks~\cite{kapteyn2021probabilistic}.

\begin{figure}[h!] 
    \centering
    \includegraphics[width=0.99\linewidth]{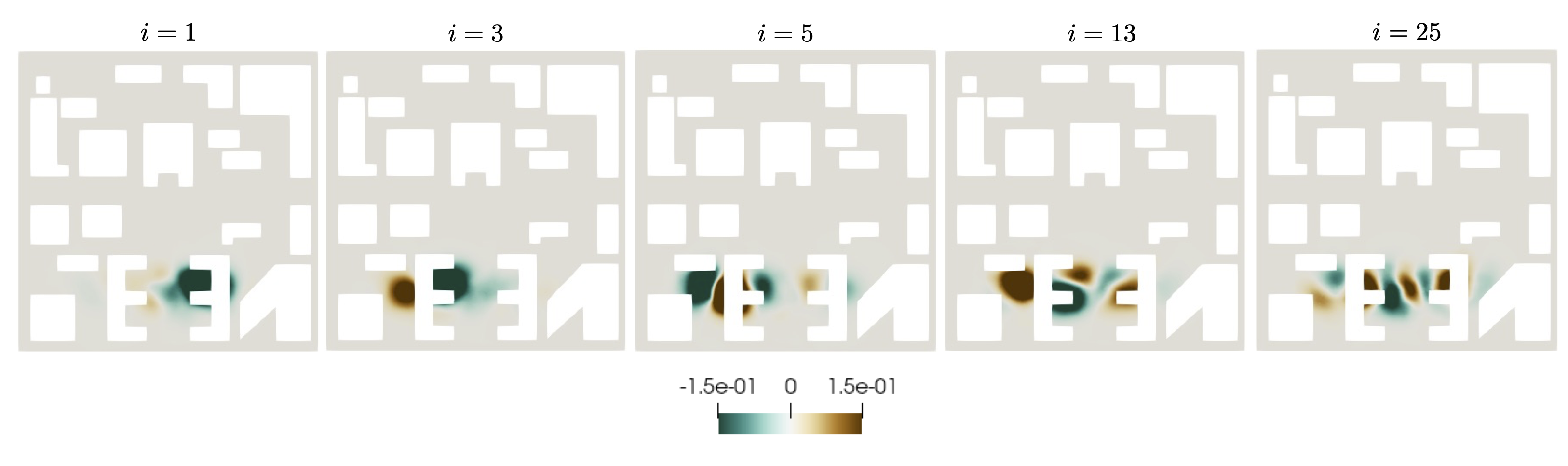}
    \caption{\textbf{NEMO uncertainty quantification.} The subspace posterior covariance can be computed explicitly with the NEMO approximation, providing direct, rapid uncertainty quantification for the inverse solution. This enables probabilistic analysis for informed decision-making in digital twins. Here, we visualize a selection of eigenmodes of the subspace Hessian at a particular $m$ for the contaminant initial condition problem, which have the least variance in the posterior. }
    \label{fig:posterior-modes}
\end{figure}

\subsubsection{Discussion}
 
We emphasize the advantages of NEMO compared to both (1) the traditional inverse problem solution using the full PDE system, and (2) other existing neural operator approaches for inverse problems. In the traditional PDE approach, the inverse solution is highly ill-posed and sensitive to regularization, and proper selection of the prior is critical for obtaining good quality solutions. This approach is not amenable to real-time deployment, since evaluation of the gradient and regularization tuning requires high-dimensional linear system solves that are computationally expensive. With a reduced p2o operator approach, the dimensionality reduction provides effective regularization in the inverse problem by restricting the solution to a low-rank subspace, resulting in improved solution quality. 

Further, other existing neural operator approaches which are surrogates for the forward PDE solution operator require solving a nonlinear optimization problem online to obtain the inverse solution. As demonstrated in Table~\ref{tab:comparison}, this optimization problem does not have specific structure, resulting in variable online costs without guarantees of convergence. Consequently, the availability of a closed form solution is a clear advantage of the NEMO approach for real-time solutions. For other neural operator approaches which learn the inverse map directly, the regularization is often imposed through the network architecture, and provides an arbitrarily nonlinear mapping which does not exploit the physics structure of the class of problems. NEMO, in contrast, exploits this physics structure, which also provides a means of uncertainty quantification, since the form of the inverse problem is equipped with an exact posterior covariance. As demonstrated, our NEMO approach has significant speedup, uncertainty quantification, and interpretability advantages compared to other neural operator approaches.

\section{Conclusions}\label{sec:conclusion}

The neural matrix operator (NEMO) methodology provides rapid inverse solutions to enable data assimilation for digital twins of physical systems governed by partial differential equations, with potential applications across a broad range of scientific domains. NEMO approximates the map from the model parameters $m$ to the parameter-to-observable operator $\mathcal{F}(m)$ in a reduced subspace, enabling a rapid-to-evaluate closed-form inverse solution. We demonstrate strong performance of NEMO in approximation for the numerical testbed examples in this work. We also demonstrate that NEMO provides orders of magnitude speedup compared to evaluating the true reduced p2o operator. We note that the training costs of NEMO are not a factor in assessing the real-time computational cost of the target application; they are offline costs that enable rapid evaluation speeds upon deployment. 

The efficiency and scalability of our framework is predicated on the viability of a low-rank approximation. The output dimension of NEMO scales with the subspace dimension of the inversion parameters and the number of observations. If either of these are large, the network size and complexity may increase. In particular, for time-dependent problems, the frequency of measurements may be high, resulting in large data dimension $d$ even for a small number of sensors. Using a single fully-connected network in this setting may result in very large networks. Increasing the dimension of the inversion parameter subspace may also increase the necessary size and complexity of NEMO. In these cases, more complex or advanced architectures may be considered. 

We further discuss the assumptions on the modeling parameters. In this work, we assumed the model parameters $m$ are fully characterized; in some cases, obtaining $m$ will require additional measurement or inspection, and possibly the solution of a separate inverse problem. Future work will conduct empirical analysis on the errors introduced in $m$ and their impact on NEMO performance. Future work may also explore nonlinear encodings of the model parameters. In the examples in this paper, the model parameters are well-represented in a low-dimensional subspace. For problems with large Kolmogorov n-widths (e.g., \cite{GreifUrban19}), NEMO can be extended to nonlinear encodings of the model parameters, while preserving the linear structure with respect to the inversion parameters.

\paragraph{Acknowledgements}

This work was supported by AFOSR grant FA9550-21-1-0089 under the NASA University Leadership Initiative (ULI), DOE grant DE-SC002317, and NSF grant 2313033. This material is based on work supported by the National Science Foundation Graduate Research Fellowship under Grant No.~DGE 2137420. The authors acknowledge the Texas Advanced Computing Center (TACC) at The University of Texas at Austin for providing computational resources that have contributed to the results in this paper. The authors would like to thank Dr.~Patrick Blonigan, Dr.~Noel Clemens, and the members of the FAST ULI team for the helpful discussions on the hypersonic application. The authors would additionally like to thank Dingcheng Luo for helpful discussions during the preparation of the manuscript.

\newpage
\bibliographystyle{ieeetr}
\bibliography{ref}

\newpage 

\appendix
\section{Overview of notation}

In this section we provide further detail on notations for completeness and clarity.

\paragraph{Overview of spaces $\mathcal{U},\mathcal{Q},\mathcal{M}$}

\begin{itemize}

    \item We consider PDE problems arising on spatial domains $\Omega \subset \mathbb{R}^2, \mathbb{R}^3,\dots$, taking the following form:
\begin{equation}
    A(m) u = Cq.
\end{equation}

    \item The state variable $u $ belongs to the separable Hilbert space $\mathcal{U}$, which we assume to be infinite dimensional. For example $\mathcal{U} = L^2(\Omega)$, $H^1(\Omega)$, defined as
    \begin{align*}
        L^2(\Omega) = \left\{f:\Omega \rightarrow \mathbb{R}: \|f\|_{L^2(\Omega)}^2 < \infty, \enskip \langle f,g\rangle_{L^2(\Omega)} = \int_\Omega f(x)g(x)dx\right\}\\
        H^1(\Omega) = \left\{f:\Omega \rightarrow \mathbb{R}: \|f\|^2_{H^1(\Omega)} < \infty, \langle f, g\rangle_{H^1(\Omega)} = \int_\Omega f(x)g(x) + \nabla f(x)^T\nabla g(x)dx \right\},
    \end{align*}
    where $\nabla$ denotes the spatial gradient.

    \item The inversion parameter $q$ belongs to the separable Hilbert space $\mathcal{Q}$, which we also consider to be infinite dimensional. For example $\mathcal{Q} = L^2(\partial\Omega)$ or $H^{\frac{1}{2}}(\partial \Omega)$ when the parameter $q$ represents a boundary condition. We note that the space of $\mathcal{Q}$ is often a consequence of the regularity of $\mathcal{U}$ and the associated PDE operator, e.g., via the trace theorem / Sobolev embedding theorem. Often times it may be the case that for a general variational PDE problem $\mathcal{Q}$ is not a Hilbert space, e.g., $\mathcal{Q} = H^{-\frac{1}{2}}(\partial\Omega)$. But in these cases there are Hilbert space subsets that are continuously embedded in the spaces, e.g., $L^2(\partial \Omega) \subset H^{-\frac{1}{2}}(\partial \Omega)$, so we choose to work with the Hilbert space subset. We are thereby able to utilize the inner product to construct orthonormal bases at the loss of missing some rougher, distributional representations of $q$. This is a suitable tradeoff for inverse problems as we tend to seek smooth $q$ anyways in order to make the inverse problem well-posed. For more information see Section \ref{appendix:coupling}. 

    \item The model parameter $m$ belongs to the separable Hilbert space $\mathcal{M}$, which we consider to be either finite dimensional or infinite-dimensional depending on the parametrization of the PDE. 

    \item We denote the topological dual of a Hilbert space using $'$, e.g., $\mathcal{U}'$.
\end{itemize}

\paragraph{Overview of (linear) operators}

\begin{itemize}
    \item  We denote the space of linear operators that map from Hilbert space $\mathcal{X}$ to Hilbert space $\mathcal{Y}$ by $\mathcal{L}(\mathcal{X},\mathcal{Y})$, and define the operator norm as follows:
    \begin{equation}
        \|A\|_{\mathcal{L}(\mathcal{X},\mathcal{Y})} = \sup_{0 \neq x \in \mathcal{X}} \frac{ \|Ax\|_{\mathcal{Y}}}{\|x\|_{\mathcal{X}}}.
    \end{equation}
    This is the infinite-dimensional analogue of the finite dimensional 2 norm $\|\cdot\|_2$ for matrices. Additionally we will utilize the Hilbert--Schmidt norm for operators, which can be defined through an orthonormal basis for $\{\psi_i\}_{i=1}^\infty$ with $\langle \psi_i,\psi_j\rangle_\mathcal{X} = \delta_{ij}$, as 
    \begin{equation}
        \|A\|_{HS(\mathcal{X},\mathcal{Y})} = \sqrt{\sum_{i=1}^\infty \|A\psi_i\|^2_{\mathcal{Y}}}.
    \end{equation}
    This is the infinite-dimensional analogue of the Frobenius norm $\|\cdot\|_F$ for matrices. 

    \item The PDE operator $A :\mathcal{M} \times \mathcal{U} \rightarrow \mathcal{U}'$ is nonlinear in $m$ and linear in $\mathcal{U}$. For each $m$, $A(m) \in \mathcal{L}(\mathcal{U},\mathcal{U}')$. Likewise the inverse operator $A(m)^{-1} \in  \mathcal{L}(\mathcal{U}',\mathcal{U})$, for each $m$.

    \item The observation operator $B\in \mathcal{L}(\mathcal{U},\mathbb{R}^{d})$.

    \item The coupling operator $C \in \mathcal{L}(\mathcal{Q},\mathcal{U}')$.
    \item The operator $\mathcal{F}(m) = BA^{-1}(m)C \in \mathcal{L}(\mathcal{Q},\mathbb{R}^d)$.
\end{itemize}

\paragraph{Bochner spaces with respect to probability measures}

\begin{itemize}
    \item We consider probability measures $\mu$ for functions in $\mathcal{M}$. We consider the Borel sigma algebra $\mathcal{B}(\mathcal{M})$, generated by open sets with respect to the norm $\|\cdot \|_{\mathcal{M}}$, which is induced by the inner product $\langle \cdot, \cdot \rangle_{\mathcal{M}}$. The measure space is thus $(\mathcal{M},\mathcal{B}(\mathcal{M}),\mu)$, where the measure $\mu: \mathcal{B}(\mathcal{M})\rightarrow [0,1]$. 

    \item For finite dimensional $\mathcal{M}$, we work directly with densities.

    \item For a potentially different measure space $(\mathcal{X},\mathcal{B}(\mathcal{X}))$, and a measurable function $T:\mathcal{M}\rightarrow \mathcal{X}$, we consider the pushforward measure $T_\sharp \mu$, where $T_\sharp \mu(B) = \mu(T^{-1}(B))$ for all $B \in \mathcal{B}(\mathcal{X})$. In other words if $m \sim \mu$, then $T(m) \sim T_\sharp \mu$. We utilize pushforwards briefly in Appendix \ref{appendix:theorem-proof}

    \item We consider $L^p$ Bochner spaces with respect to reference measures, e.g., $\mu$ over $\mathcal{M}$ for $1 \leq p < \infty$. If $\mathcal{G}:\mathcal{M}\rightarrow \mathcal{Y}$ is a measurable function, then we define the parametric Bochner space $L^p(\mathcal{M},\mu;\mathcal{Y})$ as follows:
    \begin{equation}
        L^p(\mathcal{M},\mu;\mathcal{Y}) = \left\{\mathcal{G}: \mathcal{M}\rightarrow \mathcal{Y} : \underbrace{\left(\mathbb{E}_{m\sim \mu}\left[\|\mathcal{G}(m)\|^p_\mathcal{Y}\right]\right)^\frac{1}{p}}_{L^p(\mathcal{M},\mu;\mathcal{Y}) \text{ norm}} < \infty \right\}.
    \end{equation}
    \item The expectation. e.g., for the map $\mathcal{G}$ is defined as follows:
    \begin{equation}
        \mathbb{E}_{m \sim \mu}\left[\mathcal{G}(m) \right] = \int_\mathcal{M} \mathcal{G}(m)d\mu(m)
    \end{equation}

    \item In the case of $L^\infty(\mathcal{M},\mu,\mathcal{Y})$, we define it as follows:
    \begin{equation}
        L^\infty(\mathcal{M},\mu;\mathcal{Y}) = \left\{\mathcal{G}: \mathcal{M}\rightarrow \mathcal{Y} : \underbrace{\text{ess sup}_{m \in \mathcal{M}}\|\mathcal{G}(m)\|_\mathcal{Y}}_{L^\infty(\mathcal{M},\mu;\mathcal{Y}) \text{ norm}} < \infty \right\}.
    \end{equation}
    where the essential supremum is an upper bound for the norm almost-everywhere (a.e.), i.e., everywhere except on sets of measure zero.

    \item We utilize these spaces to measure the operators $\mathcal{F}(m)$, $\mathcal{F}(\cdot)\Phi_r$, and $\mathbf{F}(\Psi_r^*m)$, and thereby identify $\mathcal{Y}$ with spaces of linear operators. 

    \item We utilize the shorthand notation $L^p_\mu$ when the associated Bochner space can be determined from context.

\end{itemize}

\section{On the generality of the coupling operator $C:\mathcal{Q}\rightarrow \mathcal{U}'$} \label{appendix:coupling}
In this section we discuss the coupling operator $C$, which allows us to represent right hand sides, boundary conditions and initial conditions. Abstractly, the coupling operator $C:\mathcal{Q}\rightarrow \mathcal{U}'$, since we consider the forward PDE operator in a variational sense, so $A(m):\mathcal{U} \rightarrow \mathcal{U}'$. We give examples below.

\subsection{Sources}

In the case of sources, we can directly assume that $\mathcal{Q} = \mathcal{U}'$, and the coupling operator is the identity map, $C = I_{\mathcal{U}'}$. Take for concreteness the Poisson equation driven by a source, with homogeneous Dirichlet boundary conditions:
\begin{equation}
    -\nabla \cdot (\exp(m)\nabla u) = q \quad \text{in } \Omega; \qquad  u = 0  \quad \text{on } \partial \Omega.
\end{equation}

\subsection{Boundary conditions} Here we give a short summary of some methods used to construct coupling (lifting) operators for boundary conditions to be interpreted as right hand sides as in \eqref{eq:fwd_map_system}. We begin with a construction for Dirichlet boundary conditions, which we motivate in an abstract variational sense. Consider the PDE with Dirichlet boundary conditions, and a source term given by 

\begin{equation}
    A(m) u = f_0 \quad \text{in }  \Omega; \qquad u = q \quad \text{on } \partial\Omega.
\end{equation}
Suppose that $A(m)$ is a second order elliptic operator, and that $\Omega \subset \mathbb{R}^n$ is bounded with Lipschitz boundary $\partial \Omega$. Our goal is to find $u \in H^1(\Omega)$ such that $u|_{\partial\Omega} = q$. If $q\in \mathcal{Q}=H^{\frac{1}{2}}(\partial \Omega)$, then by the trace theorem there exists a bounded right inverse of the trace
\begin{equation*}
    \mathcal{R}:H^{1/2}(\partial\Omega)\rightarrow H^1(\Omega),
        \qquad T\mathcal{R}q=q.
\end{equation*}
Writing 
\begin{equation*}
    u = w + \mathcal{R}q, \qquad w \in H^1_0(\Omega)
\end{equation*}
gives the homogeneous boundary problem
\begin{equation*}
    A(m)w = f_0 - A(m)\mathcal{R}q.
\end{equation*}
Thus Dirichlet boundary data may be represented as an effective right hand side via
\begin{equation*}
    C(m)q = -A(m)\mathcal{R}q
\end{equation*}
Note that this coupling depends on $m$ through $A(m)$.
For more information see \cite[Section 1.2]{demkowicz2023mathematical}.

Neumann and Robin boundary conditions can be handled directly as right hand sides in variational forms, e.g., denoting by $n$ the unit normal vector to the boundary $\partial\Omega$, the elliptic PDE system
\begin{equation}
    -\nabla \cdot (\exp(m)\nabla u) = f_0 \quad \text{in }\Omega; \quad \exp(m)\nabla u \cdot n = q \quad \text{ on } \partial \Omega
\end{equation}
can be re-written variationally as: find $u \in H^1(\Omega)$
\begin{equation}
    \int_\Omega \exp(m)\nabla u \cdot \nabla v dx = \int_{\Omega} f_0 v dx + \int_{\partial\Omega} q v ds \quad \forall v \in H^1(\Omega).
\end{equation}
The boundary term then defines a coupling operator $C:\mathcal{Q}\rightarrow \mathcal{U}'$ through
\begin{equation}
    \langle Cq,v\rangle_{\mathcal{U}',\mathcal{U}} = \int_{\partial\Omega}qvds,
\end{equation}
and the Neumann data enter directly as a right hand side in the variational problem. Pure Neumann conditions may require additional compatibility and uniqueness conditions.
Robin and mixed boundary conditions can be handled by similar constructions. We refer the reader to the following references for more information \cite{demkowicz2023mathematical,brennermathematical,necas2011direct}. We utilize Neumann boundary conditions in the hypersonic pressure estimation example in Section~\ref{section:aero}.

\subsection{Initial conditions}

We give a simple construction of the coupling operators for initial conditions. For simplicity, consider the abstract first order autonomous PDE system, and we assume $m$ is not time-dependent:
\begin{equation} \label{eq:abstract_dynamical_system}
    \frac{\partial u}{\partial t} + K(m)u = f,  \quad u(0) = u_0.
\end{equation}
In this setting, the state space $\mathcal{U}$ is often considered to be the temporal Bochner space for times $t \in (0,T)$, e.g., $\mathcal{U} = L^2((0,T);H^1(\Omega))$ together with sufficient temporal regularity for the initial condition $u(0) = u_0$ to be well-defined.
The norm is defined as follows:
\begin{equation}
    \|u\|_{L^2((0,T);\mathcal{Y})} = \sqrt{\int_0^T\|u(t)\|^2_\mathcal{Y} dt}
\end{equation}
The semigroup theory of linear dynamical systems establishes that the solution to the homogeneous version ($f=0$) of \eqref{eq:abstract_dynamical_system} takes the following general form
\begin{equation}
    u_\text{hom}(t) = \exp(-tK(m))u_0.
\end{equation}
See, for example, \cite{engel2000one,pazy2012semigroups} for more information. The full solution includes the nonhomogeneous (particular) solution due to the source $f$, which becomes $u = u_\text{hom}+u_p$:
\begin{align}
    u(t) &= \exp(-tK(m))u_0 + \int_0^t \exp(-(t-s)K(m))f(s)ds.
\end{align}
This is known as the Duhamel formula. We abstractly have a solution and coupling operator together. We can here interpret the solution operator at time t, to be $A_t^{-1}(m) = \exp(-tK(m))$, and then the coupling operator for the initial condition takes the form of identity, while the coupling for the right hand side involves a memory kernel integral.

One can similarly see the same effect when considering a time-integrated discrete system. Consider e.g., Crank--Nicolson for the homogeneous equation above, the update at iteration $k$ is
\begin{equation} \label{eq:CN_update}
    \mathbf{u}_{k+1}  = \left[I_\mathbf{U} + \frac{\Delta t}{2}\mathbf{K}(\mathbf{m}) \right]^{-1} \left[I_\mathbf{U} - \frac{\Delta t}{2}\mathbf{K}(\mathbf{m}) \right]\mathbf{u}_k = \mathbf{A}^{-1}(\mathbf{m})\mathbf{u}_k,
\end{equation}
where we use bold face to distinguish vector valued numerical quantities from the aforementioned limiting operators and functions. The solution at index $k$, as a function of the initial conditions is given then by
\begin{equation}
    \mathbf{u}_{k+1} = \mathbf{A}^{-(k+1)}(\mathbf{m})\mathbf{u}_0.
\end{equation}
The coupling operator for the discrete initial condition is the identity on the discrete state space. We utilize an initial condition mapping in the contaminant advection diffusion problem in Section~\ref{section:contaminant}.

\section{Analysis of neural operator-based inversion} \label{appendix:no}

\subsection{NEMO inversion consistency and dimension-independence for smooth maps}\label{appendix:theorem-proof}

In this section we present the proofs for the NEMO inverse problem consistency and discretization dimension independence as established in Theorem~\ref{theorem:ip_ua}. The first part of this theorem establishes the consistency of the NEMO-approximated inverse problem. The first sub-result \eqref{eq:linf_ipua} is formalized in Proposition \ref{prop:pointwise_error_bound}. The second sub-result \eqref{eq:l2_ipua} is formalized in Corollary~\ref{prop:l1l2_bound}, and follows directly from Proposition~\ref{prop:pointwise_error_bound}. The second part discusses approximation rates under various smoothness assumptions. In the infinite-dimensional smooth map setting, these rates are independent of the ambient discretization dimension.

\subsubsection{Analysis of Inverse Problem Errors}

Here, we analyze the errors of the inverse problem that are introduced by the neural operator approximation. We begin with a pointwise a priori error bound for the solution of the inverse problem via the surrogate error.

\begin{proposition}\label{prop:pointwise_error_bound}

For every $m$ for which the reduced operators are defined,
\begin{equation}
    \|\mathbf{q}_r^\star(m)-\mathbf{q}_r^\ddagger(m)\|_2
    \leq
    C
    \|\mathbf{F}_\theta(\Psi_r^*m)-\mathcal{F}(m)\Phi_r\|_F,
\end{equation}
where one may take
\begin{equation}
    C
    =
    \frac{3}{2}
    \frac{
        \sqrt{\|\Gamma^{-1}\|_2}
        \|\mathbf{d}\|_{\Gamma^{-1}} 
    }{\gamma}
    \leq
    \frac{3}{2}
    \frac{\|\Gamma^{-1}\|_2\|\mathbf{d}\|_2}{\gamma}.
\end{equation}

\end{proposition}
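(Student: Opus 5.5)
The plan is to reduce the statement to a perturbation bound for the Tikhonov-regularized pseudoinverse in whitened coordinates. Fix $m$ and write $\mathbf{F} := \mathcal{F}(m)\Phi_r \in \mathbb{R}^{d\times r_q}$ and $\mathbf{G} := \mathbf{F}_\theta(\Psi_r^*m)$. Whiten with $\Gamma^{-1/2}$:
\[
\mathbf{A} := \Gamma^{-1/2}\mathbf{F}, \qquad \mathbf{B} := \Gamma^{-1/2}\mathbf{G}, \qquad \mathbf{b} := \Gamma^{-1/2}\mathbf{d}.
\]
Then \eqref{eq:subspace_ip} and \eqref{eq:subspace_ip_nemo} become $\mathbf{q}_r^\star = R(\mathbf{A})\mathbf{b}$ and $\mathbf{q}_r^\ddagger = R(\mathbf{B})\mathbf{b}$, where
\[
R(\mathbf{X}) := M_\mathbf{X}^{-1}\mathbf{X}^T, \qquad M_\mathbf{X} := \mathbf{X}^T\mathbf{X} + \gamma I_r .
\]
Since $\|\mathbf{b}\|_2 = \|\mathbf{d}\|_{\Gamma^{-1}}$ and $\|\mathbf{A}-\mathbf{B}\|_2 \le \|\Gamma^{-1/2}\|_2\,\|\mathbf{F}-\mathbf{G}\|_F = \sqrt{\|\Gamma^{-1}\|_2}\,\|\mathbf{F}-\mathbf{G}\|_F$, it suffices to show
\[
\|R(\mathbf{A})-R(\mathbf{B})\|_2 \le \frac{3}{2\gamma}\,\|\mathbf{A}-\mathbf{B}\|_2 .
\]
The final inequality in the statement then follows from $\|\mathbf{d}\|_{\Gamma^{-1}} \le \sqrt{\|\Gamma^{-1}\|_2}\,\|\mathbf{d}\|_2$.

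\textbf{Elementary SVD bounds.} For any matrix $\mathbf{X}$ I will use three bounds, each obtained by diagonalizing via the SVD of $\mathbf{X}$:
\begin{align*}
\|M_\mathbf{X}^{-1}\|_2 &\le \frac{1}{\gamma}, \\
\|M_\mathbf{X}^{-1}\mathbf{X}^T\|_2 &\le \sup_{s\ge0}\frac{s}{s^2+\gamma} = \frac{1}{2\sqrt{\gamma}}, \\
\|I - \mathbf{X}M_\mathbf{X}^{-1}\mathbf{X}^T\|_2 &= \|\gamma(\mathbf{X}\mathbf{X}^T+\gamma I)^{-1}\|_2 \le 1 .
\end{align*}
The last one uses the push-through identity $\mathbf{X}M_\mathbf{X}^{-1}\mathbf{X}^T = \mathbf{X}\mathbf{X}^T(\mathbf{X}\mathbf{X}^T+\gamma I)^{-1}$.

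\textbf{Algebraic splitting.} Set $\mathbf{E} := \mathbf{A}-\mathbf{B}$. First,
\[
R(\mathbf{A})-R(\mathbf{B}) = M_\mathbf{A}^{-1}\mathbf{E}^T + (M_\mathbf{A}^{-1}-M_\mathbf{B}^{-1})\mathbf{B}^T .
\]
By the resolvent identity, $M_\mathbf{A}^{-1}-M_\mathbf{B}^{-1} = M_\mathbf{A}^{-1}(\mathbf{B}^T\mathbf{B}-\mathbf{A}^T\mathbf{A})M_\mathbf{B}^{-1}$. The key choice is to split the difference of Gram matrices as
\[
\mathbf{B}^T\mathbf{B}-\mathbf{A}^T\mathbf{A} = -\mathbf{A}^T\mathbf{E} - \mathbf{E}^T\mathbf{B},
\]
so that each factor sits next to a matching $M^{-1}$. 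Substituting, the total difference regroups as
\[
R(\mathbf{A})-R(\mathbf{B}) = M_\mathbf{A}^{-1}\mathbf{E}^T\bigl(I-\mathbf{B}M_\mathbf{B}^{-1}\mathbf{B}^T\bigr) - \bigl(M_\mathbf{A}^{-1}\mathbf{A}^T\bigr)\,\mathbf{E}\,\bigl(M_\mathbf{B}^{-1}\mathbf{B}^T\bigr).
\]

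\textbf{Bounding the two terms.} By the three SVD bounds, the first term has norm at most $\frac{1}{\gamma}\|\mathbf{E}\|_2$. The second has norm at most $\frac{1}{2\sqrt\gamma}\cdot\|\mathbf{E}\|_2\cdot\frac{1}{2\sqrt\gamma} = \frac{1}{4\gamma}\|\mathbf{E}\|_2$. The total is $\frac{5}{4\gamma}\|\mathbf{E}\|_2 \le \frac{3}{2\gamma}\|\mathbf{E}\|_2$, as required.

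\textbf{Main obstacle.} The hard step is choosing this splitting. A naive triangle inequality on $M_\mathbf{A}^{-1}\mathbf{E}^T$ and on the resolvent term separately produces mixed factors such as $M_\mathbf{A}^{-1}\mathbf{B}^T$, which have no $\gamma$-only bound, and gives a worse constant. Grouping the $\mathbf{E}^T$ terms into the projector complement $I-\mathbf{B}M_\mathbf{B}^{-1}\mathbf{B}^T$ avoids both problems.
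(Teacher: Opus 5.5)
Your proof is correct, and it takes a different route from the paper's: you use an exact algebraic identity where the paper uses a homotopy.

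\textbf{The paper's argument.} It interpolates $\mathbf{F}_t = \mathcal{F}(m)\Phi_r + t\mathbf{H}$ and differentiates the normal equations in $t$. It then bounds the residual by $\|\mathbf{d}\|_{\Gamma^{-1}}$ and bounds $\|\mathbf{q}_t\|_2$ by $\|\mathbf{d}\|_{\Gamma^{-1}}/\sqrt{\gamma}$, both by comparing the objective at $\mathbf{q}_t$ with its value at $\mathbf{q}=0$. It applies the same two resolvent bounds you use, $1/\gamma$ and $1/(2\sqrt{\gamma})$, and integrates over $t\in[0,1]$.

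\textbf{How your identity relates.} Your formula
\[
R(\mathbf{A}) - R(\mathbf{B}) = M_{\mathbf{A}}^{-1}\mathbf{E}^T\bigl(I - \mathbf{B}M_{\mathbf{B}}^{-1}\mathbf{B}^T\bigr) - \bigl(M_{\mathbf{A}}^{-1}\mathbf{A}^T\bigr)\mathbf{E}\bigl(M_{\mathbf{B}}^{-1}\mathbf{B}^T\bigr)
\]
is the finite-difference counterpart of the paper's derivative formula. Applied to $\mathbf{b}$, the factor $I - \mathbf{B}M_{\mathbf{B}}^{-1}\mathbf{B}^T$ produces the whitened residual, and $M_{\mathbf{B}}^{-1}\mathbf{B}^T\mathbf{b}$ is the solution itself.

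\textbf{What your route buys.}
\begin{itemize}
\item You need no differentiation or integration along a path.
\item You get a data-independent Lipschitz bound on the regularized pseudoinverse itself, $\|R(\mathbf{A}) - R(\mathbf{B})\|_2 \le \frac{5}{4\gamma}\|\mathbf{A}-\mathbf{B}\|_2$.
\item Your constant $5/4$ is better than the paper's $3/2$.
\end{itemize}
The better constant is not intrinsic to the algebraic route, however. It comes entirely from bounding the solution factor by $\|M_{\mathbf{B}}^{-1}\mathbf{B}^T\|_2 \le 1/(2\sqrt{\gamma})$ rather than by the variational estimate $\|\mathbf{q}_t\|_2 \le \|\mathbf{d}\|_{\Gamma^{-1}}/\sqrt{\gamma}$. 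Making that same substitution in the paper's derivative bound also yields $5/4$.

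\textbf{What the paper's route buys.} The grouping you flag as the main obstacle comes for free from the product rule. The derivative involves only the single base operator $\mathbf{F}_t$ and the direction $\mathbf{H}$, so a mixed factor like $M_{\mathbf{A}}^{-1}\mathbf{B}^T$ never appears. The same template also applies to any smooth dependence of the solution on the operator, even where no closed-form difference identity is available.
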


\begin{proof}
    We employ a pathwise perturbation argument related to classical perturbation analyses of regularized inverses \cite{gulliksson2000perturbation,golub1973differentiation}.
    For simplicity, define $\mathbf{F}_0(m) = \mathcal{F}(m)\Phi_r$, and 
    $\mathbf{F}_1(m) = \mathbf{F}_\theta(\Psi_r^*m)$. Let $\mathbf{H}(m)=\mathbf{F}_1(m)-\mathbf{F}_0(m)$, and for each $t \in [0,1]$, define $\mathbf{F}_t(m) = \mathbf{F}_0(m) + t\mathbf{H}(m)$. Consider the following quantity
\begin{equation*}
    \mathbf{q}_t(m)
    =
    \left(\mathbf{F}_t(m)^*\Gamma^{-1}\mathbf{F}_t(m)+\gamma I_r\right)^{-1}
    \mathbf{F}_t(m)^*\Gamma^{-1}\mathbf{d},
\end{equation*}
which interpolates between $\mathbf{q}_0(m)=\mathbf{q}_r^\star(m)$ and $\mathbf{q}_1(m)=\mathbf{q}_r^\ddagger(m)$. Differentiating the normal equations
\begin{equation*}
    \left(\mathbf{F}_t^*(m)\Gamma^{-1}\mathbf{F}_t(m)+\gamma I_r\right)\mathbf{q}_t(m)
    =
    \mathbf{F}_t^*(m)\Gamma^{-1}\mathbf{d}
\end{equation*}
with respect to $t$ gives
\begin{equation}\label{eq:perturbation_formula}
    \left(\mathbf{F}_t^*(m)\Gamma^{-1}\mathbf{F}_t(m)+\gamma I_r\right)\partial_t{\mathbf{q}}_t(m)
    =
    \mathbf{H}^*(m)\Gamma^{-1}(\mathbf{d}-\mathbf{F}_t(m)\mathbf{q}_t(m))
    -
    \mathbf{F}_t^*(m)\Gamma^{-1}\mathbf{H}(m)\mathbf{q}_t(m).
\end{equation}
Since $\mathbf{q}_t(m)$ minimizes
\begin{equation*}
    \frac12\|\mathbf{F}_t(m)\mathbf{q}-\mathbf{d}\|_{\Gamma^{-1}}^2
    +
    \frac{\gamma}{2}\|\mathbf{q}\|_2^2,
\end{equation*}
comparison with $\mathbf{q}=0$ yields

\begin{equation*}
    \frac{1}{2}\|\mathbf{F}_t(m)\mathbf{q}_t(m) - \mathbf{d}\|^2_{\Gamma^{-1}} + \frac{\gamma}{2}\|\mathbf{q}_t(m)\|^2_2 \leq \frac{1}{2}\|\mathbf{d}\|_{\Gamma^{-1}}^2,
\end{equation*}
from which we can derive $\|\mathbf{F}_t(m)\mathbf{q}_t(m)- \mathbf{d}\|_{\Gamma^{-1}}
    \leq
    \|\mathbf{d}\|_{\Gamma^{-1}}$ and $\|\mathbf{q}_t(m)\|_2 \leq \frac{1}{\sqrt{\gamma}}\|\mathbf{d}\|_{\Gamma^{-1}}$.
Moreover, we have the bound
\begin{equation}
    \left\|
    \left(\mathbf{F}_t(m)^*\Gamma^{-1}\mathbf{F}_t(m)+\gamma I_r\right)^{-1}
    \right\|_2
    \leq
    \frac1{\gamma}.
\end{equation}
Additionally, we have:
\begin{equation}
    \left\|
    \left(\mathbf{F}_t(m)^*\Gamma^{-1}\mathbf{F}_t(m)+\gamma I_r\right)^{-1}\mathbf{F}_t(m)^*\Gamma^{-\frac{1}{2}}
    \right\|_2
    = \max_i \frac{\sigma_i(\Gamma^{-\frac{1}{2}}\mathbf{F}_t(m)) }{\sigma_i(\Gamma^{-\frac{1}{2}}\mathbf{F}_t(m))^2 + \gamma},
\end{equation}
where $\sigma_i(A)$ denotes the $i^{th}$ singular value of the matrix A. Consider next, the function $f(\sigma) = \frac{\sigma}{\sigma^2 + \gamma}$, which has the derivative $f'(\sigma) = \frac{\gamma - \sigma^2}{(\sigma^2 + \gamma)^2}$, and attains a maximum for $\sigma = \sqrt{\gamma}$, which yields the bound:
\begin{equation}
    \left\|
    \left(\mathbf{F}_t(m)^*\Gamma^{-1}\mathbf{F}_t(m)+\gamma I_r\right)^{-1}\mathbf{F}_t(m)^*\Gamma^{-\frac{1}{2}}
    \right\|_2
    \leq \frac{1}{2\sqrt{\gamma}}.
\end{equation}
Therefore, combining these bounds with \eqref{eq:perturbation_formula} and the triangle inequality, we obtain
\begin{align}
    \|\partial_t{\mathbf{q}}_t(m)\|_2
    &\leq
    \frac{
        \sqrt{\|\Gamma^{-1}\|_2}\|\mathbf{H}(m)\|_2\|\mathbf{d}\|_{\Gamma^{-1}}
    }{\gamma}
    +
    \frac{
        \sqrt{\|\Gamma^{-1}\|_2}\|\mathbf{H}(m)\|_2\|\mathbf{d}\|_{\Gamma^{-1}}
    }{2\gamma} \\
    &=
    \frac{3}{2}
    \frac{
        \sqrt{\|\Gamma^{-1}\|_2}\|\mathbf{d}\|_{\Gamma^{-1}}
    }{\gamma}
    \|\mathbf{H}(m)\|_2.
\end{align}
Using $\|\mathbf{H}(m)\|_2\leq\|\mathbf{H}(m)\|_F$ gives the desired result via integration over $t\in(0,1)$
\begin{align}
    \|\mathbf{q}_r^\star(m)-\mathbf{q}_r^\ddagger(m)\|_2
    =
    \left\|\int_0^1\partial_t{\mathbf{q}}_t(m)dt \right\|_2
    \leq
    \int_0^1\|\partial_t{\mathbf{q}}_t(m)\|_2dt
    \leq
    \frac{3}{2}
    \frac{
        \sqrt{\|\Gamma^{-1}\|_2}
        \|\mathbf{d}\|_{\Gamma^{-1}}
    }{\gamma}
    \|\mathbf{F}_\theta(\mathbf{m}_r)-\mathcal{F}(m)\Phi_r\|_F.
\end{align}
    
\end{proof}

\noindent We proceed to on-average bounds over the reference measure over $\mathcal{M}$, $\mu$.

\begin{corollary}\label{prop:l1l2_bound}
If $\mathcal{F}  \in L^2(\mathcal{M},\mu; \mathcal{L}(\mathcal{Q},\mathbb{R}^d))$,  and $\mathbf{F}_\theta \in L^2(\mathbb{R}^{r_m},(\Psi_r^*)_\sharp\mu; \mathbb{R}^{d\times r_q})$ then

\begin{equation}
    \mathbb{E}_{m\sim\mu}\left[\|\mathbf{q}_r^\star(m) - \mathbf{q}_r^\ddagger(m) \|_2\right] = \mathcal{O}\left( \sqrt{\mathbb{E}_{m\sim \mu}\left[ \|\mathbf{F}_\theta(\Psi_r^*m) - \mathcal{F}(m)\Phi_r\|_F^2\right]}\right).
\end{equation}
\end{corollary}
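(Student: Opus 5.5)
The corollary should follow directly from Proposition~\ref{prop:pointwise_error_bound}, integrated against $\mu$ and combined with Jensen's (or Cauchy--Schwarz) inequality. The key point is that the constant
\[
C=\frac{3}{2}\frac{\sqrt{\|\Gamma^{-1}\|_2}\,\|\mathbf{d}\|_{\Gamma^{-1}}}{\gamma}
\]
in that proposition depends only on the data $\mathbf{d}$, the noise covariance $\Gamma$ and the regularization parameter $\gamma$. It does not depend on $m$. So the pointwise bound holds with one uniform constant for every $m$ at which the reduced operators are defined.

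The plan has four steps.

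\textbf{Step 1 (finiteness a.e.).} The hypotheses $\mathcal{F}\in L^2(\mathcal{M},\mu;\mathcal{L}(\mathcal{Q},\mathbb{R}^d))$ and $\mathbf{F}_\theta\in L^2(\mathbb{R}^{r_m},(\Psi_r^*)_\sharp\mu;\mathbb{R}^{d\times r_q})$ guarantee that both reduced operators are finite for $\mu$-a.e.\ $m$. For $\mathcal{F}(m)\Phi_r$ I would use $\|\mathcal{F}(m)\Phi_r\|_F\le \sqrt{r_q}\,\|\mathcal{F}(m)\|_{\mathcal{L}}$, which holds because $\Phi_r$ has orthonormal columns. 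For the network term I would use the change of variables
\[
\mathbb{E}_{m\sim\mu}\big[\|\mathbf{F}_\theta(\Psi_r^*m)\|_F^2\big]=\mathbb{E}_{\mathbf{m}_r\sim(\Psi_r^*)_\sharp\mu}\big[\|\mathbf{F}_\theta(\mathbf{m}_r)\|_F^2\big].
\]
Hence the error $E(m):=\|\mathbf{F}_\theta(\Psi_r^*m)-\mathcal{F}(m)\Phi_r\|_F$ lies in $L^2_\mu$, by Minkowski's inequality.

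\textbf{Step 2 (measurability).} The map $m\mapsto \mathbf{q}_r^\star(m)-\mathbf{q}_r^\ddagger(m)$ is a continuous function of the two reduced operators, since the regularized normal matrix is always invertible for $\gamma>0$. It is therefore measurable.

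\textbf{Step 3 (integrate the pointwise bound).} Proposition~\ref{prop:pointwise_error_bound} holds a.e., so taking expectations gives
\[
\mathbb{E}_{\mu}\big[\|\mathbf{q}_r^\star-\mathbf{q}_r^\ddagger\|_2\big]\le C\,\mathbb{E}_\mu[E(m)].
\]

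\textbf{Step 4 (Jensen).} Since $\mu$ is a probability measure,
\[
\mathbb{E}_\mu[E]\le\sqrt{\mathbb{E}_\mu[E^2]},
\]
which is exactly the claimed $\mathcal{O}$ bound with constant $C$.

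The only real obstacle is making sure the constant is uniform in $m$. Here that is automatic, because the bounds used in the proposition are all uniform in $m$: $\|(\cdot+\gamma I)^{-1}\|\le 1/\gamma$, the filter bound $1/(2\sqrt{\gamma})$, and the comparison with $\mathbf{q}=0$. No integrability of inverse operators is therefore needed. If one instead wanted the data $\mathbf{d}$ to vary with $m$, I would apply Cauchy--Schwarz to the product $\|\mathbf{d}\|_{\Gamma^{-1}}E(m)$, which would require $\mathbf{d}\in L^2_\mu$. For fixed $\mathbf{d}$, as in the statement, this is unnecessary.
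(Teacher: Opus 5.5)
Your proposal is correct and follows the same route as the paper. Both integrate the pointwise bound of Proposition~\ref{prop:pointwise_error_bound}, whose constant $C$ depends only on $\mathbf{d}$, $\Gamma$ and $\gamma$ and not on $m$, and then pass from $\mathbb{E}_\mu[E]$ to $\sqrt{\mathbb{E}_\mu[E^2]}$ by Jensen/Cauchy--Schwarz. Your Steps~1 and~2, the finiteness of the right-hand side from the $L^2$ hypotheses and the measurability of $m\mapsto\mathbf{q}_r^\star(m)-\mathbf{q}_r^\ddagger(m)$, are extra rigor the paper leaves implicit. Your remark about $m$-dependent data is a sensible aside but is not needed for the statement as posed.
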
 

\begin{proof}
    We have the following:

    \begin{align*}
        \mathbb{E}_{m\sim\mu}\left[\|\mathbf{q}_r^\star(m) - \mathbf{q}_r^\ddagger(m) \|_2\right] &\leq C \mathbb{E}_{m\sim\mu}\left[\|\mathbf{F}_\theta(\Psi_r^*m)-\mathcal{F}(m)\Phi_r\|_F\right] \\
        &\leq C\sqrt{\mathbb{E}_{m\sim\mu}\left[\|\mathbf{F}_\theta(\Psi_r^*m)-\mathcal{F}(m)\Phi_r\|^2_F\right]},
    \end{align*}
    where the constant $C$ is defined in Proposition \ref{prop:pointwise_error_bound}.
\end{proof}

\subsubsection{Approximation of $\mathcal{F}$ by $\mathbf{F}_\theta$} \label{appendix:approximation_bounds}

In Theorem \ref{theorem:ip_ua}, we bound the inverse problem error by different errors in the neural operator approximation for $L^2_\mu$ and $L^\infty_\mu$ Bochner spaces. In this section, we discuss different results that can be used to derive approximation rates for the neural operator learning in these particular spaces.
There are a large variety of general universal approximation results that can be used to prove the existence of neural network sequences that can achieve arbitrary errors in various norms. For finite-dimensional $\mathcal{M}$, there are a number of results that prove universal approximation of neural networks in $L^\infty$ restricted to compact sets $K \subset \mathcal{M}$ \cite{cybenko1989approximation,leshno1993multilayer,lu2017expressive}. When the reference measure has bounded moments, these results can be used to extend these results to $L^2_\mu$ via cutoff-type arguments \cite{barron1993universal,hornik1991approximation}.
In this subsection, we discuss results from the literature that can be used to not only prove the existence of the neural networks satisfying these universal approximation conditions, but also derive approximation rates and associated complexities for neural network approximation of $\mathcal{F}(m)\Phi_r$. 

For simplicity, we consider the vectorized output of $\mathbf{F}_\theta(\mathbf{m}_r)\in \mathbb{R}^{ d \times r_q}$ , $\text{vec}(\mathbf{F}_\theta)(\mathbf{m}_r) \in \mathbb{R}^{dr_q}$. Additionally we consider the case that $m$ is either finite-dimensional, or has a truncated representation on an orthonormal basis expansion as in \eqref{eq:qr_mr_expansion}.

\paragraph{Finite-dimensional (compact) $\mathcal{M} \subset \subset \mathbb{R}^{d_M}$.}

We first consider $\mathcal{M}$ to be a compact subset of a $\mathbb{R}^{d_M}$, and utilize a result from \cite{petersen2024mathematical}. For this set of results, we do not consider the dimension reduction operator $\Psi_r^*$, since $m$ is finite dimensional. We note that when $d_M$ is very large, dimension reduction may be necessary, and the errors due to the truncation can be addressed via e.g., Lipschitz constants pre-multiplying the basis truncation errors as in \cite{bhattacharya2021}.We proceed with a definition. 

\begin{definition}
    Let $k \in \mathbb{N}_0$ and $s \in [0,1]$, and $\mathcal{M} \subset \mathbb{R}^{d_M}$. Then for $f:\mathcal{M} \rightarrow \mathbb{R}$

    \begin{align}
        \|f\|_{C^{k,s}(\mathcal{M})} =& \sup_{m \in \mathcal{M}} \max_{\mathbf{\alpha} \in \mathbb{N}_0^{d_M}, |\mathbf{\alpha}|\leq k}|D^\mathbf{\alpha}f(m)| \nonumber \\
        &+ \sup_{m\neq y \in \mathcal{M}} \max_{\mathbf{\alpha}\in \mathbb{N}_0^{d_M}, |\mathbf{\alpha}|\leq k}\frac{|D^\mathbf{\alpha}f(m) - D^{\mathbf{\alpha}}f(y)|}{\|m - y\|^s_\mathcal{M}}
    \end{align}
and denote by $C^{k,s}(\mathcal{M})$, the set of functions $f \in C^k(\mathcal{M})$ for which $\|f\|_{C^{k,s}(\mathcal{M})} < \infty$.
\end{definition}
\noindent We now state the result that we extend to our case.
\begin{proposition}{Theorem 7.10 in \cite{petersen2024mathematical}} \label{prop:thm710pz}
    Let $d_M\in \mathbb{N}$, $k \in \mathbb{N}_0$, $s \in [0,1]$, and $\mathcal{M}=[0,1]^{d_M}$. Then there exists a constant $C >0$ such that for every $f \in C^{k,s}(\mathcal{M})$, and every $N\geq 2$ there exists a ReLU neural network $\mathcal{N}_N^f$ such that 
    \begin{equation}
        \sup_{m \in \mathcal{M}} | f(m) - \mathcal{N}_N^f(m)| \leq C \|f\|_{C^{k,s}(\mathcal{M})}N^{-\frac{k+s}{d_M}},
    \end{equation}
and $\text{size}(\mathcal{N}_N^f)\leq CN\log(N)$ and $\text{depth}(\mathcal{N}_N^f)\leq C\log(N)$.
\end{proposition}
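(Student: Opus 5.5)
The plan follows the standard Yarotsky-type construction: localize $f$ on a uniform grid, replace it locally by Taylor polynomials, and emulate the resulting piecewise-polynomial approximant with a ReLU network. Products are emulated through a ReLU approximation of squaring. I write $M\in\mathbb{N}$ for the grid resolution and will later couple it to $N$ via $M^{d_M}\sim N$. All constants below depend only on $d_M,k,s$.

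\emph{Step 1 (partition of unity and local Taylor expansion).} Take the grid points $\mathbf{j}/M$ with $\mathbf{j}\in\{0,\dots,M\}^{d_M}$. Let $\phi_{\mathbf{j}}(m)=\prod_{i=1}^{d_M}h(M m_i-j_i)$, where $h(x)=\max(0,1-|x|)$ is the hat function, which is itself a small ReLU network. Then $\sum_{\mathbf{j}}\phi_{\mathbf{j}}\equiv 1$ on $[0,1]^{d_M}$. Each $\phi_{\mathbf{j}}$ is supported in a cube of side $2/M$, and at most $2^{d_M}$ of them are nonzero at any point. Let $P_{\mathbf{j}}$ be the degree-$k$ Taylor polynomial of $f$ at $\mathbf{j}/M$. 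Taylor's theorem with the Hölder remainder gives $|f(m)-P_{\mathbf{j}}(m)|\le C\|f\|_{C^{k,s}}M^{-(k+s)}$ on $\operatorname{supp}\phi_{\mathbf{j}}$. Hence $\tilde f=\sum_{\mathbf{j}}\phi_{\mathbf{j}}P_{\mathbf{j}}$ satisfies $\|f-\tilde f\|_\infty\le C2^{d_M}\|f\|_{C^{k,s}}M^{-(k+s)}$. Expanding $P_{\mathbf{j}}$ in the monomials $(m-\mathbf{j}/M)^{\alpha}$ with $|\alpha|\le k$, $\tilde f$ becomes a linear combination of $\mathcal{O}(M^{d_M})$ products, each having at most $d_M+k$ factors taken from $[-1,1]$. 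The coefficients are bounded by $\|f\|_{C^{k,s}}$.

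\emph{Step 2 (ReLU emulation of products).} I will invoke the sawtooth construction. The function $x^2$ on $[0,1]$ is approximated to accuracy $\delta$ by a ReLU network of depth and size $\mathcal{O}(\log(1/\delta))$. Polarization, $xy=\tfrac12\big((x+y)^2-x^2-y^2\big)$, then gives multiplication on $[-1,1]^2$ with the same cost. Composing these in a binary tree yields a network $\widetilde{\Pi}$ approximating a product of $n\le d_M+k$ factors to accuracy $\mathcal{O}(\delta)$. Its cost is $\mathcal{O}(\log(1/\delta))$, because errors propagate only through bounded Lipschitz constants. Replacing every product in $\tilde f$ by $\widetilde{\Pi}$ and summing gives a network $\mathcal{N}$. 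Since at most $2^{d_M}$ terms are active at each point, $\|\tilde f-\mathcal{N}\|_\infty\le C\|f\|_{C^{k,s}}\delta$.

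\emph{Step 3 (parameter choice and counting).} Choose $M=\lfloor N^{1/d_M}\rfloor$ and $\delta=M^{-(k+s)}$. The total error is then $\le C\|f\|_{C^{k,s}}N^{-(k+s)/d_M}$. The network runs $\mathcal{O}(M^{d_M})=\mathcal{O}(N)$ product subnetworks in parallel, each of size and depth $\mathcal{O}(\log(1/\delta))=\mathcal{O}(\log N)$. This gives size $\le CN\log N$ and depth $\le C\log N$. The final output is a linear layer, and the condition $N\ge2$ keeps $\log N$ positive.

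\emph{Main obstacle.} The delicate point is the error bookkeeping in Step 2. I must check that each product approximation does not degrade the $2^{d_M}$-locality, since $\widetilde{\Pi}$ need not vanish exactly where $\phi_{\mathbf{j}}=0$. I would first handle this by enforcing $\widetilde{\Pi}(x,0)=0$ exactly; this holds for the polarization construction because the approximate squaring network is exact at $0$. As a fallback, I would accept the $\mathcal{O}(M^{d_M}\delta)$ global error and shrink $\delta$ by a factor $N^{-1}$, which only changes the constant in the logarithm.
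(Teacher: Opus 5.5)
Your proposal is correct and takes essentially the same route as the source: the paper gives no proof of its own but imports Theorem 7.10 of \cite{petersen2024mathematical}, whose proof is this same Yarotsky-type scheme (a grid partition of unity with $M^{d_M}\sim N$, local Taylor polynomials with H\"older remainder, sawtooth-based approximate products at accuracy $\delta\sim M^{-(k+s)}$, and locality from the exact-zero property of the multiplication network; note that with the polarization $\tfrac12((x+y)^2-x^2-y^2)$ this exact-zero property only holds if one and the same rescaled squaring network is used in all three terms, though your fallback of shrinking $\delta$ by $N^{-1}$ makes the property unnecessary). The only difference is cosmetic: the textbook proof, as I recall it, takes the partition of unity to be the piecewise-linear nodal hat functions of a uniform simplicial mesh, which are exact constant-size ReLU networks, so a single approximate product $\varphi_\nu\cdot\hat p_\nu$ per node suffices, whereas you fold the $d_M$ tensor-product hat factors into the product tree, with the same size and depth bounds.
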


\begin{corollary}
Consider the setting of Proposition \ref{prop:thm710pz}. Assume that $\mathcal{F}(\cdot)\Phi_r \in C^{k,s}(\mathcal{M})^{d\times r_q}$, e.g., a matrix valued function where each entry is a $C^{k,s}(\mathcal{M})$ function, then there exists a stacked ReLU neural network $\mathbf{F}_\theta$ with $dr_q$ outputs such that
\begin{equation}
    \sup_{m \in \mathcal{M}} \|\mathcal{F}(m)\Phi_r -  \mathbf{F}_\theta(m)\|_{\ell^\infty(\mathbb{R}^{d\times r_q})} \leq CN^{-\frac{k+s}{d_M}} \sup_{i,j} \|(\mathcal{F}(\cdot)\Phi_r)_{ij}\|_{C^{k,s}(\mathcal{M})},
\end{equation}
where $\text{size}(\mathbf{F}_\theta) \leq C d r_q N\log(N)$  and $\text{depth}(\mathbf{F}_\theta) \leq C\log(N)$.
\end{corollary}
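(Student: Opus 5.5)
The plan is to apply Proposition~\ref{prop:thm710pz} separately to each of the $dr_q$ scalar entries of $\mathcal{F}(\cdot)\Phi_r$ and then stack the resulting networks in parallel. First I will fix indices $(i,j)$ and note that the entry map $f_{ij}(m) = e_i^T\mathcal{F}(m)\phi_j$ lies in $C^{k,s}(\mathcal{M})$ with norm at most $S := \sup_{i,j}\|(\mathcal{F}(\cdot)\Phi_r)_{ij}\|_{C^{k,s}(\mathcal{M})}$, which is finite because the index set is finite. Proposition~\ref{prop:thm710pz} then gives, for each $N\ge 2$, a ReLU network $\mathcal{N}^{ij}_N$ with
\begin{equation*}
\sup_{m\in\mathcal{M}}|f_{ij}(m)-\mathcal{N}^{ij}_N(m)| \le C\, S\, N^{-\frac{k+s}{d_M}},
\end{equation*}
together with $\text{size}(\mathcal{N}^{ij}_N)\le CN\log N$ and $\text{depth}(\mathcal{N}^{ij}_N)\le C\log N$. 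The important point is that the constant $C$ depends only on $d_M,k,s$ and not on $f$, so a single constant serves all entries.

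Next I will build $\mathbf{F}_\theta$ by running the $dr_q$ subnetworks in parallel. All of them take the shared input $m$, and their weight matrices are placed block-diagonally layer by layer, so that the outputs form the vector $\text{vec}(\mathbf{F}_\theta)(m)$, which is then reshaped into a $d\times r_q$ matrix. Taking the maximum over $(i,j)$ of the entrywise bounds gives the stated $\ell^\infty$ bound directly. For the size, the parallel network has $\sum_{ij}\text{size}(\mathcal{N}^{ij}_N)\le Cdr_qN\log N$ parameters. Its depth is the maximum of the individual depths, which is at most $C\log N$.

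The one technical obstacle is that parallelization requires all subnetworks to have equal depth. To handle this, I will pad each shallower network with identity layers, realized in ReLU form as $x=\sigma(x)-\sigma(-x)$. Each padding layer adds $O(1)$ parameters per output, so at most $O(\log N)$ parameters per subnetwork in total. This is absorbed into $CN\log N$ after enlarging $C$, and that enlarged constant is the $C$ in the corollary. With the padding in place, the construction and both complexity bounds go through as described.
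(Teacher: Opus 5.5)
Your proposal is correct and matches the paper's proof. Both apply Proposition~\ref{prop:thm710pz} entrywise with a constant that does not depend on $f$, equalize depths by composing the shallower subnetworks with ReLU identity networks, and stack the $dr_q$ subnetworks in parallel, giving size $\mathcal{O}(dr_qN\log N)$ and depth $\mathcal{O}(\log N)$.
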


\begin{proof}
    As a consequence of Proposition \ref{prop:thm710pz} we get the existence of $dr_q$ different neural networks for each entry in the coefficient matrix $\mathcal{F}(m)\Phi_r$ satisfying the bounds and complexities stated in Proposition \ref{prop:thm710pz}. What is left to establish is that they can be stacked together into one neural network, due to their potentially varying depths. 

    Find the set of neural networks with maximal depths. These networks have $\text{depth}_\text{max} = \mathcal{O}(\log(N))$ depth and $\mathcal{O}(N\log(N))$ complexity as a result of Proposition \ref{prop:thm710pz}. For each network with shorter depth, the depth difference is $L = \text{depth}_\text{max} - \text{depth}_i$. Since we consider ReLU neural networks, we can use the fact that ReLU neural networks can exactly reconstruct the identity (see for example Lemma 5.1 in \cite{petersen2024mathematical}). Such a network has depth $L$ and complexity $2L\times r_\text{hidden}$, where $r_\text{hidden}$ is the number of hidden neurons in the final layer of the shorter network (in this case it is 1, since the network is scalar-valued). Concatenating (composing) ReLU neural networks remains a ReLU neural network. Moreover, stacking up scalar valued neural networks to construct a matrix-valued neural network is still a ReLU neural network. Each of these neural networks has now the same depth, and the complexities are of the same order. Since there are $dr_q$ of them the size of the final stacked and depth-extended matrix-valued ReLU network is $\mathcal{O}(dr_q)\times \mathcal{O}(N\log(N)) = \mathcal{O}(dr_qN\log(N))$.

\end{proof}

\paragraph{Infinite-dimensional $\mathcal{M}$.}

For the infinite-dimensional case, we apply some results from \cite{herrmann2024neural}, which apply more broadly than the case we consider. In the present case we are interested in orthonormal systems for reducing the inputs (i.e., the operator $\Psi_r^*$) which we are mapping to finite-dimensional outputs (i.e., coefficient matrices in $\mathbb{R}^{d\times r_q}$). The results in \cite{herrmann2024neural} apply more generally to Banach spaces (via Riesz bases / frames) as well as infinite-dimensional outputs. For a more detailed exposition, we refer the reader to \cite{herrmann2024neural}, we give a brief overview for completeness.

In order to derive discretization dimension independent approximation rates, we restrict our attention to the map on scaled subsets of $\mathcal{M}^s\subset \mathcal{M}$. This identifies smooth subsets of the space $\mathcal{M}$. We define the weights $\mathbf{w} = (w_j)_{j\in\mathbb{N}}$ such that $\mathbf{w}^{1+\epsilon} \in \ell^1(\mathbb{N})$ for all $\epsilon>0.$ We can then define the $\mathcal{M}^s$ norm as follows.

\begin{equation}
    \|m\|_{\mathcal{M}^s}^2 = \sum_{j=1}^\infty \langle m,\psi_j\rangle_\mathcal{M}^2 w_j^{-2s}.
\end{equation}
And we define $\mathcal{M}^s = \{ m \in \mathcal{M}: \|m\|_{\mathcal{M}^s} < \infty \}$. We define the following operation on uniform intervals $U = [-1,1]^\mathbb{N}$, with scalar parameters $\rho>0$ and $s > \frac{1}{2}$
\begin{equation}
    \sigma_\rho^s = \begin{cases}
                    U \rightarrow \mathcal{M} \\
                    \xi \mapsto \rho \sum_{j \in \mathbb{N}} w_j^s \xi_j \psi_j. 
                \end{cases}
\end{equation}
Consider the product uniform measure over $U$, $\nu = \bigotimes_{j=1}^\infty
\text{Unif}_{[-1,1]}$. Then $\widetilde{C}^s_\rho(\mathcal{M}) = \{ \sigma_\rho^s(\xi): \xi \in U\}$ is a weighted infinite-dimensional cube with decaying side lengths. The approximation results rely on a technical assumption that uses complex extensions of the input and output spaces, e.g., $\mathcal{M}_\mathbb{C}^s = \{1,i\}\otimes \mathcal{M}^s$. 

\begin{assumption}[Assumption 3.3 in \cite{herrmann2024neural}, adapted]
\label{assumption:jakob33}
There exist $s>1$, $\rho>0$, $M<\infty$, and an open set
$O_{\mathbb{C}}\subset\mathcal{M}_{\mathbb{C}}$ containing
$\widetilde{C}_\rho^s(\mathcal{M})$ such that
\begin{equation}
    \sup_{m\in O_{\mathbb{C}}}
    \|\mathcal{F}(m)\Phi_{r_q}\|_F
    \leq M,
\end{equation}
and $\mathcal{F}(\cdot)\Phi_{r_q}:
    O_{\mathbb{C}}
    \rightarrow
    \mathbb{C}^{d\times r_q}$
is holomorphic.
\end{assumption}

As discussed in \cite{herrmann2024neural}, this assumption is satisfied by some solution operators for second order elliptic PDEs, which are among the solution operators that we are interested in for the present work. In our case we also conjugate these solution operators with linear operators $B,\Phi_r$ which map them to finite dimensional coefficient outputs. 
This setup allows us to derive both $L^\infty(\mathcal{M}^s, (\sigma_\rho^s)_\sharp\nu;\mathbb{R}^{d \times r_q})$ and $L^2(\mathcal{M}^s, (\sigma_\rho^s)_\sharp\nu;\mathbb{R}^{d \times r_q})$ results. 

Note for asymptotic convergence, more coefficients of the input function $m$ are required, so these results implicitly require $r(N)$ to grow. 

\begin{proposition}{Corollary 3.8 in \cite{herrmann2024neural}, adapted}
Suppose Assumption \ref{assumption:jakob33} is satisfied with $s>1$. Fix $\delta>0$ and $\rho>0$, then there exists a constant $C>0$ such that for every $N \in \mathbb{N}$ there exists a ReLU neural network $\mathbf{F}_{\theta,N}$ of size $\mathcal{O}(N)$,
depending on finitely
many input coefficients, such that
\begin{equation}
    \sup_{m\in B_\rho(\mathcal{M}^s)}
    \left\|
        \mathcal{F}(m)\Phi_{r_q}
        -
        \mathbf{F}_{\theta,N}
        \bigl(\Psi_{r(N)}^*(m)\bigr)
    \right\|_F
    \leq
    C N^{-(s-1)+\delta}.
\end{equation}

\end{proposition}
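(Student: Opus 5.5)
The plan is to obtain this statement as a direct specialization of Corollary~3.8 in \cite{herrmann2024neural}, rather than proving an approximation theorem from scratch. The work consists of checking that our matrix-valued, finitely-supported output setting fits the hypotheses of that result, and then translating its conclusion back into the notation of Theorem~\ref{theorem:ip_ua}.

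\textbf{Output space.} First, we vectorize the output, identifying $\mathbb{R}^{d\times r_q}$ equipped with $\|\cdot\|_F$ with $\mathbb{R}^{dr_q}$ equipped with the Euclidean norm. This is an isometry, and it extends to the complexifications $\mathbb{C}^{d\times r_q}\cong\mathbb{C}^{dr_q}$. The output space is therefore a finite-dimensional Hilbert space, which is a trivial instance of the Banach (or Hilbert) output spaces allowed in \cite{herrmann2024neural}. No output encoder or decoder is needed, or equivalently the identity serves as both.

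\textbf{Input encoder.} On the input side, the ONB $\{\psi_j\}$ of $\mathcal{M}$ is in particular a Riesz basis. The analysis operator $\Psi_r^*$, which returns the first $r$ coefficients $\langle m,\psi_j\rangle_\mathcal{M}$, is exactly the encoder used in \cite{herrmann2024neural}.

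\textbf{Holomorphy and the ball-to-cube inclusion.} Assumption~\ref{assumption:jakob33} gives holomorphy and uniform boundedness of $\mathcal{F}(\cdot)\Phi_{r_q}$ on a complex open neighborhood $O_\mathbb{C}$ of the cube $\widetilde{C}^s_\rho(\mathcal{M})$. This is precisely the hypothesis of their Assumption~3.3 with $\mathcal{Y}=\mathbb{C}^{dr_q}$.

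We also need to relate the ball $B_\rho(\mathcal{M}^s)$ to the cube. If $\|m\|_{\mathcal{M}^s}\le\rho$, then each coefficient satisfies
\begin{equation*}
|\langle m,\psi_j\rangle_\mathcal{M}|\,w_j^{-s}\le\|m\|_{\mathcal{M}^s}\le\rho .
\end{equation*}
Setting $\xi_j=\langle m,\psi_j\rangle_\mathcal{M}/(\rho w_j^s)\in[-1,1]$ gives $m=\sigma^s_\rho(\xi)$. Hence $B_\rho(\mathcal{M}^s)\subset\widetilde{C}^s_\rho(\mathcal{M})$, and a uniform bound over the cube implies the stated bound over the ball.

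\textbf{Source of the rate.} I will then invoke their Corollary~3.8 to produce ReLU networks of size $\mathcal{O}(N)$ acting on the first $r(N)$ coefficients $\xi_1,\dots,\xi_{r(N)}$. The rate comes from the following chain:
\begin{itemize}
\item holomorphy on a polyellipse neighborhood gives $\ell^p$-summability of the Legendre (or Taylor) coefficients of the map;
\item this summability holds with $p$ arbitrarily close to $1/s$, because $\mathbf{w}^{1+\epsilon}\in\ell^1$ for all $\epsilon>0$;
\item best $N$-term truncation then yields the rate $N^{-(1/p-1)}=N^{-(s-1)+\delta}$;
\item each retained polynomial is emulated by ReLU networks, with the emulation error absorbed into $\delta$.
\end{itemize}
Composing with the fixed linear rescaling $\xi_j=\langle m,\psi_j\rangle/(\rho w_j^s)$ is absorbed into the first affine layer. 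This yields $\mathbf{F}_{\theta,N}(\Psi^*_{r(N)}m)$ with the claimed bound in $\|\cdot\|_F$.

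\textbf{Main obstacle.} The step I expect to be the main obstacle is bookkeeping the constants. The constant $C$ from \cite{herrmann2024neural} depends on $M$, $\rho$, $s$, $\delta$, and on the output dimension only through the norm of $\mathcal{Y}$. Because the Frobenius norm is the Hilbert norm on $\mathcal{Y}$ and $M$ already bounds $\|\mathcal{F}(m)\Phi_{r_q}\|_F$, I will argue that $C$ depends on $d$, $r_q$ only through $M$, and not on the ambient discretization of $\mathcal{M}$ or $\mathcal{Q}$. If their proof instead approximates each output component separately, the size picks up an extra factor $dr_q$, as in the finite-dimensional corollary above, which would be stated explicitly.
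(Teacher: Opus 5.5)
Your proposal is correct and follows the paper's route. The paper gives no separate argument: it cites Corollary~3.8 of \cite{herrmann2024neural} with the output specialized to $\mathbb{R}^{d\times r_q}$, where no output truncation is needed and the rate is governed by the input smoothness alone, and your hypothesis check, identity output encoding, and inclusion $B_\rho(\mathcal{M}^s)\subset\widetilde{C}^s_\rho(\mathcal{M})$ carry out exactly that adaptation (the tracking of $C$ in $d,r_q$ that you flag as the main obstacle is not required, since $d$ and $r_q$ are fixed and any extra factor $dr_q$ in the size is absorbed into $\mathcal{O}(N)$).
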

The number $r(N)$ of active input coefficients is finite for each $N$ but is
not required to remain fixed as $N\to\infty$.
We next state an improved mean-square approximation rate for the specific
product-induced measure
$\widetilde{\mu}=(\sigma_\rho^s)_\sharp\nu$.

\begin{proposition}{Theorem 3.10 in \cite{herrmann2024neural}, modified.}
    Assume that $\{\psi_j\}_{j \in \mathbb{N}}$ is an ONB of $\mathcal{M}$ and let Assumption \ref{assumption:jakob33} be satisfied with $s>1$. Fix $\delta>0$ and $\rho>0$. Then there exists a constant $C>0$ such that for every $N\in \mathbb{N}$ there exists a ReLU neural network $\mathbf{F}_{\theta,N}$ of size $\mathcal{O}(N)$ such that
    \begin{equation}
        \mathbb{E}_{m\sim\widetilde{\mu}} \left[ \|\mathcal{F}(m)\Phi_{r_q} - \mathbf{F}_{\theta,N}(\Psi_{r(N)}^*m)\|_F^2 \right] \leq C N^{-2(s-\frac{1}{2} )+2\delta},
    \end{equation}
where $\widetilde{\mu} = (\sigma_\rho^s)_\sharp\nu$.
\end{proposition}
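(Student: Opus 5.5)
The plan is to reduce the statement to the scalar/vector-valued setting of \cite{herrmann2024neural} by vectorizing the output, and then to reproduce their sparse polynomial argument in $L^2$ rather than in $L^\infty$. For the vectorization, note that $\|\cdot\|_F$ on $\mathbb{R}^{d\times r_q}$ equals the Euclidean norm of $\text{vec}(\cdot)\in\mathbb{R}^{dr_q}$, which is a finite-dimensional Hilbert space. Assumption~\ref{assumption:jakob33} therefore says that $m\mapsto \text{vec}(\mathcal{F}(m)\Phi_{r_q})$ is a bounded holomorphic map from a complex neighbourhood $O_\mathbb{C}$ of $\widetilde{C}^s_\rho(\mathcal{M})$ into $\mathbb{C}^{dr_q}$. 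This is exactly the output setting covered by their Theorem 3.10, with output dimension fixed. We pull the map back to the parameter cube by setting
\[
u(\xi)=\text{vec}(\mathcal{F}(\sigma^s_\rho(\xi))\Phi_{r_q}),\qquad \xi\in U .
\]
Then $\mathbb{E}_{m\sim\widetilde\mu}[\cdot]=\mathbb{E}_{\xi\sim\nu}[\cdot\circ\sigma^s_\rho]$ by the definition of the pushforward, so it suffices to prove an $L^2(U,\nu;\mathbb{R}^{dr_q})$ bound for $u$.

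Next we verify that $u$ is $(\mathbf{b},\varepsilon)$-holomorphic with $b_j=\rho w_j^s$. Since the open set $O_\mathbb{C}$ contains the compact image $\widetilde C^s_\rho(\mathcal{M})$, polyellipses in each coordinate $\xi_j$ with semi-axis sums controlled by $b_j$ map into $O_\mathbb{C}$, and there $u$ is bounded by $M$. Because $\mathbf{w}^{1+\epsilon}\in\ell^1$, we get $\mathbf{b}\in\ell^p$ for every $p>1/s$. Standard results then give $\ell^p$-summability of the Legendre coefficients $(\|u_\nu\|_2)_\nu$ of $u$ (or of their monotone majorants), with $p$ arbitrarily close to $1/s$.

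We then approximate $u$ by best-$N$-term Legendre truncation. Since the tensorized Legendre polynomials are orthonormal in $L^2(\nu)$, Parseval together with Stechkin's inequality yields an $L^2$ error of order $N^{-(1/p-1/2)}=N^{-(s-1/2)+\delta}$. The sup-norm argument behind the previous proposition only gives $N^{-(1/p-1)}$; orthogonality is the source of the extra half power. Squaring gives the claimed rate $N^{-2(s-1/2)+2\delta}$. The active index set involves only finitely many coordinates $\xi_j$, $j\le r(N)$, and these are obtained from $m$ as $\xi_j=\langle m,\psi_j\rangle_\mathcal{M}/(\rho w_j^s)$, i.e.\ from $\Psi_{r(N)}^*m$ followed by a diagonal linear layer. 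Hence the network can be written as $\mathbf{F}_{\theta,N}(\Psi_{r(N)}^*m)$.

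Finally, we emulate the selected Legendre polynomials with ReLU networks. We use product networks built on Yarotsky-type approximations of multiplication, with tolerance $N^{-(s-1/2)}$, so that the emulation error is of the same order as the truncation error. The $dr_q$ output components share the same polynomial features, so only the last linear layer grows with $dr_q$. This is the step I expect to be the main obstacle: obtaining size $\mathcal{O}(N)$ rather than $\mathcal{O}(N\cdot\text{polylog}N)$. The difficulty is that the polynomial degrees and the number of active variables in a downward-closed set must be controlled simultaneously. I would first try the bounds for anchored or downward-closed sets used in \cite{herrmann2024neural}, absorbing the logarithmic factors by slightly shrinking the exponent, which is where the arbitrary $\delta>0$ is spent.
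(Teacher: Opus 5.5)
Your proposal is correct and follows the paper's route. The paper gives no independent argument: it treats $\mathcal{F}(\cdot)\Phi_{r_q}$ as a bounded holomorphic map into the finite-dimensional space $\mathbb{R}^{d r_q}$, uses the ONB encoder $\Psi_{r(N)}^*$, and invokes Theorem 3.10 of \cite{herrmann2024neural} directly, and your additional sketch ($(\mathbf{b},\varepsilon)$-holomorphy with $b_j=\rho w_j^s\in\ell^p$ for $p>1/s$, Stechkin in $L^2(\nu)$ for the extra half power, and ReLU emulation on downward-closed sets with logarithmic factors absorbed into $\delta$) is a faithful outline of that cited proof.
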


\subsection{Analysis of dimension reduction errors} \label{appendix:dim_reduction}

In this section we discuss some errors due to the dimension reduction of $\mathcal{F}$. For notational simplicity we use $r$ subscript for both $\Phi_r$ and $\Psi_r$, noting that these rank values $r_q$ and $r_m$ are potentially different.

\paragraph{Dimension reduction for $\mathcal{F}$}

In this section, we focus on dimension reduction strategies for $\mathcal{F}$. We begin with the goal-oriented reduction of $\mathcal{Q}$, which is accomplished using the orthonormal basis computed from the following generalized eigenvalue problem

\begin{equation}\label{eq:gevp_f}
    \mathbb{E}_{m \sim \mu} \left[\mathcal{F}(m)^*\mathcal{F}(m) \right]\phi_i = \lambda_i \phi_i.
\end{equation}
For simplicity we omit the $\Gamma^{-1}$ weighting, however this can be easily accounted for by subsuming a factor of $\Gamma^{-\frac{1}{2}}$ into $\mathcal{F}$ on the left as in \eqref{eq:f_fans_thm}. This generalized eigenvalue problem arises via the following minimization problem:

\begin{equation} \label{eq:basis_min_f}
    \min_{\text{rank }r_q\text{ ONB }U_r}\mathbb{E}_{m \sim \mu} \left[\| \mathcal{F}(m)(I_\mathcal{Q} - U_rU_r^*)\|_{HS(\mathcal{Q},\mathbb{R}^d)}^2 \right].
\end{equation}
Note that since the range of $\mathcal{F}(m)$ is finite dimensional it is automatically a Hilbert--Schmidt operator. 

The fact that the basis from \eqref{eq:gevp_f} is the solution of \eqref{eq:basis_min_f} is a consequence of a generalization \cite{bhattacharya2021} of Fan's Theorem \cite{fan1949theorem}, which states , for a given $r$,

\begin{equation}
    \max_{\text{rank r ONB }U_r} \sum_{i=1}^r\left\langle \mathbb{E}_{m \sim \mu} \left[\mathcal{F}(m)^*\mathcal{F}(m)\right]u_i,u_i\right\rangle_\mathcal{Q} = \sum_{i=1}^{r} \lambda_i.
\end{equation}
This result can be used to prove that $\Phi_r$ minimizes the error in \eqref{eq:basis_min_f}. This yields the following output reduction error:
\begin{equation}
    \mathbb{E}_{m \sim \mu} \left[\| \mathcal{F}(m)(I_\mathcal{Q} - \Phi_r\Phi_r^*)\|_{HS(\mathcal{Q},\mathbb{R}^d)}^2 \right] = \sum_{j>r_q} \lambda_j.
\end{equation}
Thus $\Phi_r$ provides an optimal linear reduction of $\mathcal{Q}$ for the reconstruction of $\mathcal{F}$, in expectation with respect to $\mu$. Using this basis for the training of the neural operator we can bound errors in the full space through the following error decompositions in $L^2(\mathcal{M},\mu;\mathcal{L}(\mathcal{Q},\mathbb{R}^d))$.

\begin{align}\label{eq:out_space_error_bound}
    \mathbb{E}_{m\sim \mu} \left[ \|\mathcal{F}(m)- \mathbf{F}_\theta(\Psi_r^*m)\Phi_r^*\|^2_{HS}\right]  \nonumber\\
    \leq 2\left( \mathbb{E}_{m\sim \mu} \left[ \|\mathcal{F}(m)- \mathcal{F}(m)\Phi_r\Phi_r^*\|^2_{HS}\right]  + \mathbb{E}_{m\sim \mu} \left[ \|\mathcal{F}(m)\Phi_r\Phi_r^*- \mathbf{F}_\theta(\Psi_r^*m)\Phi_r^*\|^2_{HS}\right] \right) \nonumber \\
    \leq 2\left( \sum_{j > r_q} \lambda_j  + \mathbb{E}_{m\sim \mu} \left[ \|\mathcal{F}(m)\Phi_r- \mathbf{F}_\theta(\Psi_r^*m)\|^2_{F}\right] \right)
\end{align}

As in \cite{bhattacharya2021}, the error due to the truncation can also be bounded under the assumption that $\mathcal{F}$ is Lipschitz with constant $L_\mathcal{F}$, that $\mu$ is a Gaussian measure, and $\Psi_r$ are chosen to be the first $r$ eigenfunctions of the covariance of $\mu$. This error is bounded as follows:
\begin{equation}
    \mathbb{E}_{m\sim \mu} \left[\|\mathcal{F}(m) - \mathcal{F}(\Psi_r\Psi_r^*m)\|^2_{HS}\right] \leq (L_\mathcal{F})^2 \mathbb{E}_{m\sim \mu} \left[\|m - \Psi_r\Psi_r^*m\|^2_\mathcal{M}\right]\leq (L_\mathcal{F})^2\sum_{j> r_m}^\infty \left(\sigma^\mu_j\right)^2.
\end{equation}

If we modify our operator learning task to learn $\mathcal{F}(\Psi_r\Psi_r^*m)$ instead of $\mathcal{F}(m)$, we can retrieve the following total bound:
\begin{align}\label{eq:total_space_error_bound}
    \mathbb{E}_{m\sim \mu} \left[ \|\mathcal{F}(m)- \mathbf{F}_\theta(\Psi_r^*m)\Phi_r^*\|^2_{HS}\right]  \nonumber\\
    \leq 2\left( \sum_{j > r_q}^\infty \lambda_j  + 2(L_\mathcal{F})^2\sum_{j> r_m}^\infty \left(\sigma_j^\mu\right)^2 + 2\mathbb{E}_{m\sim \mu} \left[ \|\mathcal{F}(\Psi_r\Psi_r^*m)\Phi_r- \mathbf{F}_\theta(\Psi_r^*m)\|^2_{F}\right] \right).
\end{align}
We however do not do this in practice as it is more intrusive than the former operator learning formulation.

\paragraph{Statistical sampling error, when approximating $\Phi_r$ from samples.}

In practice the solution of the generalized eigenvalue problem \eqref{eq:gevp_f} is intractable due to the inability to compute the integral $\mathbb{E}_{m\sim \mu}\left[\cdot\right]$. This problem is then approximated from i.i.d. samples $\{m_i\sim \mu \}_{i=1}^{N_\text{samples}}$ as
\begin{equation}
    \frac{1}{N_\text{samples}}\sum_{j=1}^{N_\text{samples}}\mathcal{F}(m_j)^*\mathcal{F}(m_j)\widehat{\phi_i} = \widehat{\lambda}_i \widehat{\phi_i}.
\end{equation}
From Lemma 39 in \cite{luo2025dimension}, we have the following bound on the empirical approximation from samples:
\begin{subequations}
\begin{align}\label{eq:phi_sample_error}
    \mathbb{E}_{m\sim \mu}\left[\|\mathcal{F}(m)(I_\mathcal{Q} - \widehat{\Phi}_r\widehat{\Phi}_r^*)\|^2_{HS(\mathcal{Q},\mathbb{R}^d)}\right] \leq \nonumber \\
    \sum_{j>r_q} \lambda_j + \min \left\{\sqrt{2r_q}\|\mathfrak{H} - \widehat{\mathfrak{H}}\|_{HS(\mathcal{Q},\mathcal{Q})}, \frac{2\|\mathfrak{H} - \widehat{\mathfrak{H}}\|^2_{HS(\mathcal{Q},\mathcal{Q})}}{\lambda_{r_q} - \lambda_{r_q+1}} \right\},
\end{align}
where 
\begin{align}
    \mathfrak{H} = \mathbb{E}_{m \sim \mu} \left[ \mathcal{F}(m)^* \mathcal{F}(m)\right]\\
    \widehat{\mathfrak{H}} = \frac{1}{N_\text{samples}} \sum_{j=1}^{N_\text{samples}} \mathcal{F}(m_j)^*\mathcal{F}(m_j)
\end{align}
\end{subequations}
The additional errors due to sampling can be accounted for by use of \eqref{eq:phi_sample_error}, to update the error bounds in \eqref{eq:out_space_error_bound}. Similar results can be established for the input basis.

\section{Numerical implementation}
\label{appendix:numerics}
\subsection{Example: Contaminant initial condition} \label{appendix:numerics-contaminant}

\paragraph{Velocity setup} For the wind velocity field, we use the solution of a potential flow problem modeled by the Laplace equation on domain $\Omega$ and boundary conditions given by
\begin{equation}
    \nabla^2u = 0, \qquad u = \begin{cases}
      \xi & \text{on } \partial\Omega_\text{north}\\
      0 & \text{on } \partial\Omega_\text{south}\\
    \end{cases}      
    \qquad \nabla u \cdot n  = \begin{cases}
        \zeta_1 & \text{on } \partial\Omega_\text{east} \\ 
        \zeta_2 & \text{on } \partial\Omega_\text{west} \\ 
        0 & \text{on } \partial\Omega_\text{else}
    \end{cases}
\end{equation}
where the values of $\xi,\zeta_1,\zeta_2$ are random variables which parameterize the velocity fields, and $\partial\Omega_\text{else}$ refers to all boundaries excluding the lateral boundaries $\partial\Omega_\text{north}, \partial\Omega_\text{south}, \partial\Omega_\text{east}, \partial\Omega_\text{west}$. The value of $\xi$ dictates the potential difference which dictates the predominant flow in the domain. The Neumann boundary conditions $\boldsymbol{\zeta} = [\zeta_1, \zeta_2]^\top$ induces crossflow effects. The wind velocity field is then given by $m = \nabla u$.

\paragraph{Reduced bases}

(Initial conditions) The initial conditions are sampled by randomly drawing clusters consisting of three Gaussian distributions. The centers are drawn from the bounding box defined by the points (1.0, 0.8, 0.05) and (3.0, 0.9, 0.3). Two additional centers are drawn from a uniform distribution $U(0.1, 0.2)$ for the distance from the original center. We sample 1000 initial conditions and compute the POD basis from these samples. The eigenvalue decay is shown on the left in Figure~\ref{fig:contaminant-sv-decay}. We choose $r_q = 50$ for the reduced dimension. 

(Wind velocities) We sample the uniform distribution $\xi \sim U(0.1, 0.5)$. This results in a southerly dominant flow direction, which is the most common wind direction based on historical wind data~\cite{winddata}. For the Neumann conditions, we sample the normal distribution $ \boldsymbol{\zeta} \sim N(\mathbf{0}, \mathbf{\Gamma})$, where $\mathbf{\Gamma} = \begin{bmatrix} 0.0025 & -0.0001 \\ -0.0001 & 0.0025\end{bmatrix}$. These values, along with the diffusion coefficient $\kappa = 1\mathrm{e}{-4}$ and $T=12$, results in feasible P\'{e}clet number ranges in the advection-diffusion problem for atmospheric conditions. We draw 500 independent samples of the parameters, and solve the potential flow problem to obtain the wind velocity field. From these fields, we compute the POD basis. The eigenvalue decay is shown on the right in Figure~\ref{fig:contaminant-sv-decay}. We observe a large drop in the decay after three modes, and therefore choose $r_m = 3$ for the reduced dimension. 

\begin{figure}[h!]
    \centering
    \includegraphics[width=0.75\linewidth]{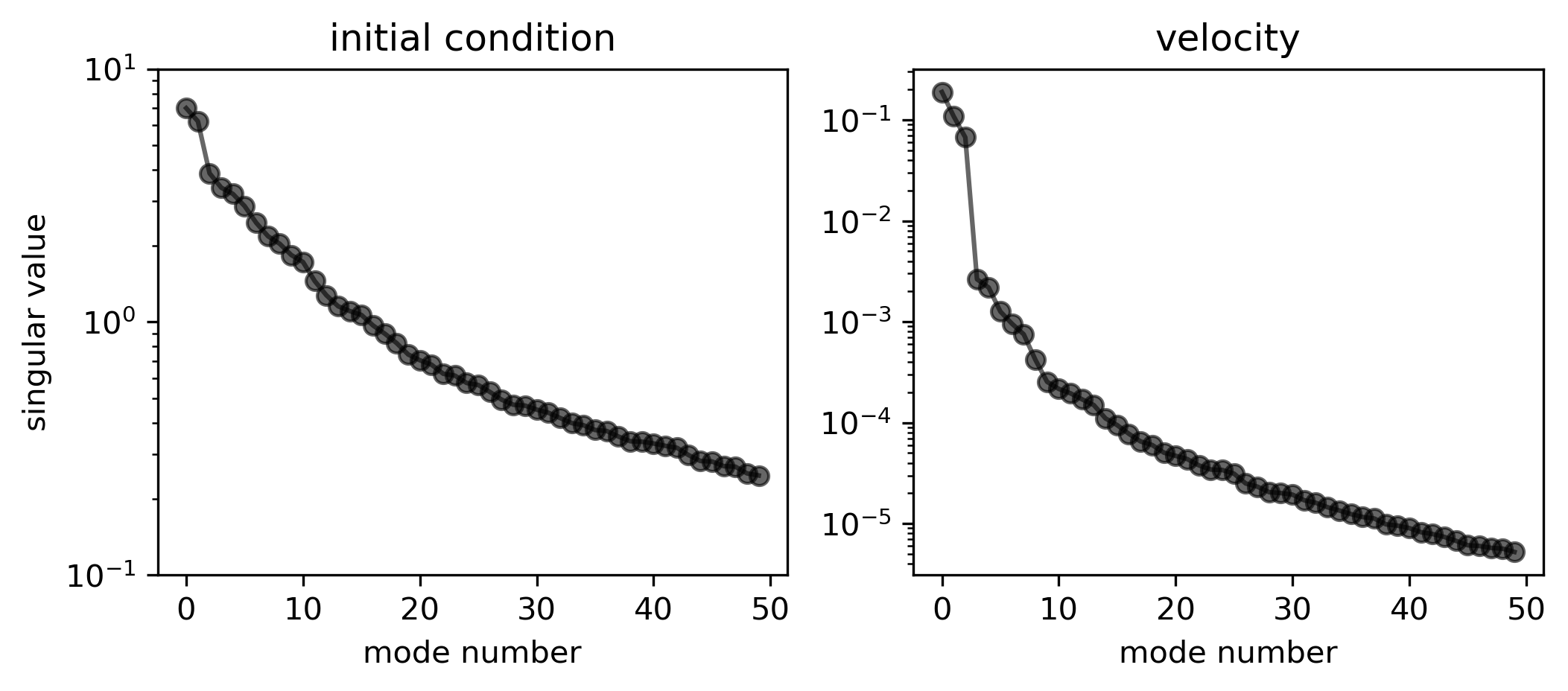}
    \caption{Eigenvalue decay for initial conditions (inversion parameters), and wind velocity fields (model parameters).}
    \label{fig:contaminant-sv-decay}
\end{figure}

\subsection{Comparison}\label{appendix:comparison}

\subsubsection{Forward surrogate}\label{appendix:comparison-forward}

The forward comparison model $\mathcal{G}_\theta$ is a multiple-input neural operator (MIONet) with two branch networks and a finite dimensional output. The inputs to each branch network are the reduced coordinate representations of the inversion and model parameters. The outputs of each branch network have equal dimensions (the latent dimension) and are aggregated via the Hadamard product. For the trunk network, a linear layer is used as a basis expansion for the finite dimensional observations, analogous to the case in Appendix B in~\cite{jin2022mionet}. For the inverse problem, the solution using $\mathcal{G}_\theta$ is given by
\begin{equation}
    \mathbf{q}_r^\star = \argmin_{\mathbf{q}_r} \| \mathcal{G}_\theta(\mathbf{q}_r, \mathbf{m}_r) - \mathbf{d} \|_2^2 + \gamma\|\mathbf{q}_r\|_2^2
\end{equation}
This is an equivalent subspace inverse problem formulation as is solved by the normal equations in Eq.~\eqref{eq:subspace_ip_nemo}. For the online optimization of this inverse problem, we used the L-BFGS optimizer in PyTorch with a gradient tolerance of $10^{-3}$ for termination, with 100 maximum iterations. 

For training $\mathcal{G}_\theta$, we use the same number of model parameter samples as is used to train NEMO in each example application. However, here we also require samples of the inversion parameters. For the contaminant problem, we compute 2000 samples of the initial conditions, and for the hypersonic problem, we compute 1000 samples of pressure fields. All combinations of inversion and model parameters are used to compute the observational data, which is then split into training and validation sets for training $\mathcal{G}_\theta$. The model is trained to obtain comparable output performance accuracy as NEMO, and the architecture with the fewest number of parameters to achieve this accuracy is selected through a hyperparameter grid search. The number of hidden layers is kept equal for each of the branch networks. The activation function used is the same as used in the corresponding NEMO model. 

In the contaminant problem, we consider a hidden layer depth 2 to 4, $q$-branch width range of 100 to 300, $m$-branch width range of 10 to 200, and latent size of 300 to 400. The selected model consisted of 2 hidden layers and width of 100 for the branch networks, and a shared latent dimension of 400. In the hypersonic problem, we consider a hidden layer depth 1 to 3, $q$- and $m-$branch width range of 50 to 200, and latent size of 200 to 300. The selected model consisted of 2 hidden layers, width of 200 for the $q$-branch network and 100 for the $m$-branch network, and a latent dimension of 300.  

\subsubsection{Direct inverse map} \label{appendix:comparison-direct}

The direct inverse map also uses the MIONet architecture, however the branch network inputs are the observations and the reduced coordinate representation of the model parameters, while the output is the reduced coordinate representation of the inversion parameters. In this work, we take the approach of training the direct inverse map on the true reduced inverse problem solution, $\mathbf{q}_r^\star$, which is obtained via the normal equations given in Equation~\ref{eq:subspace_ip}. The inverse maps are trained with the number of training data points equivalent to the number of forward solves required to produce the NEMO training data, i.e. $r_q$ times the number of samples of $m$ used for training NEMO. 

In the contaminant problem, the inverse MIONet model consisted of 2 hidden layers with a shared latent dimension of 500 with a $d$-branch width of 300 and $m$-branch width of 100. The model is trained for 1000 epochs using the Adam optimizer with an initial learning rate of $1\times 10^{-3}$ and batch size 64, and utilizing \texttt{ReduceLROnPlateau} of PyTorch with decay factor of $0.9$ and patience $50$.
For the hypersonic problem, the inverse MIONet model consisted of 3 hidden layers with a shared latent dimension of 500 with a $d$-branch width of 400 and $m$-branch width of 200. The model is trained for 2000 epochs using the Adam optimizer with an initial learning rate of $1\times 10^{-4}$ and batch size 64, and utilizing \texttt{ReduceLROnPlateau} of PyTorch with decay factor of $0.9$ and patience $30$.

\end{document}